	\newtheorem{dfn}{Definition}[section]
	\newtheorem{thm}[dfn]{Theorem}
	\newtheorem{prop}[dfn]{Proposition}
	\newtheorem{cor}[dfn]{Corollary}
	\newtheorem{lem}[dfn]{Lemma}
	\newtheorem{rem}[dfn]{Remark}
 	\newtheorem{claim}[dfn]{Claim}
	\newtheorem{ack}{Acknowledgements\!\!}
	\newcounter{yon}
	\numberwithin{equation}{section}
	\newcommand{\dist}{\mathop{\mathit{d}} \nolimits}
	\newcommand{\diam}{\mathop{\mathrm{diam}} \nolimits}
	\newcommand{\pr}{\mathop{\mathrm{pr}} \nolimits}
	\newcommand{\sep}{\mathop{\mathrm{Sep}} \nolimits}
	\newcommand{\obs}{\mathop{\mathrm{ObsDiam}}  \nolimits}
\begin{document}

	\title[Concentration of $1$-Lipschitz maps into $(B_{\ell^p}^{\infty},\dist_{\ell^q})$]
    {Concentration of $1$-Lipschitz maps into an infinite dimensional $\ell^p$-ball with the
    $\ell^q$-distance function}
	\author[Kei Funano]{Kei Funano}
	\address{Mathematical Institute, Tohoku University, Sendai 980-8578, JAPAN}
	\email{sa4m23@math.tohoku.ac.jp}
	\subjclass[2000]{53C21, 53C23}
	\keywords{mm-space, infinite dimensional $\ell^p$-ball, concentration of $1$-Lipschitz
    maps, L\'{e}vy group}
	\thanks{This work was partially supported by Research Fellowships of
	the Japan Society for the Promotion of Science for Young Scientists.}
	\dedicatory{}
	\date{\today}

	\maketitle


\begin{abstract}In this paper, we study the L\'{e}vy-Milman
 concentration phenomenon of $1$-Lipschitz maps into infinite
 dimensional metric spaces. Our main theorem asserts that the
 concentration to an infinite dimensional $\ell^p$-ball with the
 $\ell^q$-distance function for $1\leq p<q\leq +\infty$ is equivalent to
 the concentration to the real line.
 \end{abstract}
	\setlength{\baselineskip}{5mm}

    \section{Introduction}

    This paper is devoted to investigating the L\'{e}vy-Milman
    concentration phenomenon of 1-Lipschitz maps from mm-spaces (metric
    measure spaces) to infinite dimensional metric spaces. Here, an \emph{mm-space} is a triple
    $(X,\dist_X,\mu_X)$, where $\dist_X$ is a complete separable metric
    on a set $X$ and $\mu_X$ a finite Borel measure on $(X,\dist_X)$. The theory of concentration of $1$-Lipschitz functions was
    first introduced by V. D. Milman in his investigation of asymptotic
    geometric analysis (\cite{mil1}, \cite{mil2},
    \cite{mil3}). Nowadays, the theory blend with various areas of
    mathematics, such as geometry, functional analysis and
    infinite dimensional integration, discrete mathematics and
    complexity theory, probability theory, and so on (see \cite{ledoux},
    \cite{mil5}, \cite{milsch}, \cite{pestov2} and the references
    therein for further information).

    The theory
    of concentration of maps into general metric spaces was first
    studied by M. Gromov (\cite{gromovcat}, \cite{gromov2},
    \cite{gromov}). He established the theory by introducing the
    observable diameter $\obs_{Y}(X;-\kappa)$ for an mm-space $X$, a
    metric space $Y$, and $\kappa >0$ in \cite{gromov} (see Section 2 for the
    definition of the observable diameter). Given a sequence
	$\{ X_n \}_{n=1}^{\infty}$ of mm-spaces and a metric space $Y$, we
    note that $\lim_{n\to \infty}\obs_Y(X_n;-\kappa)= 0$ for any $\kappa >0$ if and only if for any
	sequence $\{ f_n :X_n \to Y\}_{n=1}^{\infty}$ of $1$-Lipschitz maps,
    there exists a sequence $\{ m_{f_n}\}_{n=1}^{\infty}$ of points in
    $Y$ such that
    \begin{align*}
     \lim_{n\to \infty}\mu_{X_n}(\{ x_n \in X_n \mid \dist_Y(f_n(x_n),m_{f_n})\geq
     \varepsilon  \})=0
     \end{align*}for any $\varepsilon >0$. If $\lim_{n\to
     \infty}\obs_{\mathbb{R}}(X_n;-\kappa)= 0$ for any $\kappa >0$, then
     the sequence
     $\{X_n\}_{n=1}^{\infty}$ of mm-spaces is called a \emph{L\'{e}vy
     family}. The L\'{e}vy families were first introduced and analyzed by Gromov and
     Milman in \cite{milgro}. In our previous works \cite{funano2},
     \cite{funagr}, \cite{funad}, \cite{funano1}, the author proved that
     if a metric space $Y$ is either an $\mathbb{R}$-tree, a doubling space, a
     metric graph, or a Hadamard manifold, then $\lim_{n\to \infty}
     \obs_Y(X_n;-\kappa)=0$ holds for any $\kappa >0$ and any L\'{e}vy
     family $\{X_n\}_{n=1}^{\infty}$. To prove these results, we needed
     to assume the finiteness of the dimension of the target metric spaces. 

     In this paper, we treat the case where the dimension of the target
     metric space $Y$ is
     infinite. The author has proved in \cite{funath} that if the target space $Y$ is so big that
     an mm-space $X$ with some homogeneity property can
     isometrically be embedded into $Y$, then its
     observable diameter $\obs_Y(X;-\kappa)$ is not close to zero. It
     seems from this result that the concentration to an infinite
     dimensional metric space cannot happen easily.

    A main theorem of this paper is the following. For $1\leq p\leq
    +\infty$, we denote by $B_{\ell^p}^{\infty}$ an infinite
    dimensional $\ell^p$-ball $\{(x_n)_{n=1}^{\infty}\in
    \mathbb{R}^{\infty} \mid \sum_{n=1}^{\infty}|x_n|^p \leq 1\}$ and by
    $\dist_{\ell^p}$ the $\ell^p$-distance function.
    \begin{thm}\label{mth1}Let $\{X_n\}_{n=1}^{\infty}$ be a sequence of
     mm-spaces and $1\leq p<q \leq +\infty$. Then, the sequence $\{
     X_n\}_{n=1}^{\infty}$ is a L\'{e}vy family if and only if
     \begin{align}\label{smth1}
      \lim_{n\to
      \infty}\obs_{(B_{\ell^p}^{\infty},\dist_{\ell^q})}(X_n;-\kappa)= 0
      \text{ for any }\kappa >0.
      \end{align}
     \end{thm}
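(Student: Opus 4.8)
The plan is to prove the two implications separately. The direction "$\eqref{smth1}\Rightarrow$ L\'{e}vy family" is the easy one: the real line $\mathbb{R}$ embeds isometrically into $(B_{\ell^p}^{\infty},\dist_{\ell^q})$ as, say, the segment $\{(t,0,0,\dots)\mid |t|\le 1\}$ in a suitably rescaled fashion — more precisely, for any $1$-Lipschitz $f\colon X_n\to\mathbb{R}$ one can compose with a fixed bi-Lipschitz identification of a bounded interval with a coordinate segment of the ball to transfer concentration, so that $\obs_{\mathbb{R}}(X_n;-\kappa)$ is controlled by $\obs_{(B_{\ell^p}^{\infty},\dist_{\ell^q})}(X_n;-\kappa)$ (up to a harmless truncation of the range, which does not affect the $n\to\infty$ limit). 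Hence vanishing of the observable diameters into the ball forces the L\'{e}vy property.

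The substantive direction is "L\'{e}vy family $\Rightarrow\eqref{smth1}$". Here I would fix a sequence of $1$-Lipschitz maps $f_n=(f_n^{(1)},f_n^{(2)},\dots)\colon X_n\to (B_{\ell^p}^{\infty},\dist_{\ell^q})$ and must produce points $m_{f_n}\in B_{\ell^p}^{\infty}$ with $\mu_{X_n}(\{\dist_{\ell^q}(f_n,m_{f_n})\ge\varepsilon\})\to 0$. Each coordinate map $f_n^{(k)}$ is $1$-Lipschitz into $\mathbb{R}$, so by the L\'{e}vy property each $f_n^{(k)}$ concentrates near a L\'{e}vy mean $m_{f_n}^{(k)}$. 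The key quantitative point is that the constraint $\sum_k |f_n^{(k)}(x)|^p\le 1$ for every $x$ forces the tail $\sum_{k>N}|f_n^{(k)}(x)|^q$ to be uniformly small once $N$ is large: indeed, on the set where $|f_n^{(k)}(x)|\le 1$ for all $k$ (which is all of $X_n$), $\sum_{k>N}|f_n^{(k)}|^q\le \sum_{k>N}|f_n^{(k)}|^p\le 1$, and more usefully, if only finitely many coordinates can be large. I would make this precise by the following trichotomy on indices $k$: since $\sum_k |m_{f_n}^{(k)}|^p$ is controlled (the L\'{e}vy means nearly satisfy the ball constraint, as $f_n$ does $\mu_{X_n}$-a.s.\ approximately), at most finitely many $k$ — say the first $N=N(\varepsilon)$ — have $|m_{f_n}^{(k)}|$ bounded below, and for the rest the contribution $|f_n^{(k)}(x)|^q$ is small; combined with the fact that on the remaining tail $\sum_{k>N}|f_n^{(k)}(x)|^p\le 1$ and $q>p$ makes the $\ell^q$-tail shrink. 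Then I take $m_{f_n}:=(m_{f_n}^{(1)},\dots,m_{f_n}^{(N)},0,0,\dots)$, truncated so it lies in $B_{\ell^p}^{\infty}$, and estimate
\begin{align*}
\dist_{\ell^q}(f_n(x),m_{f_n})^q\le \sum_{k=1}^{N}|f_n^{(k)}(x)-m_{f_n}^{(k)}|^q+\sum_{k>N}|f_n^{(k)}(x)|^q,
\end{align*}
with the obvious modification ($\sup$ in place of the sum) when $q=+\infty$.

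The first (finite) sum is handled by the union bound: it exceeds $(\varepsilon/2)^q$ only if some one of the $N$ events $\{|f_n^{(k)}-m_{f_n}^{(k)}|\ge \varepsilon/(2N^{1/q})\}$ occurs, and each has measure tending to $0$ by the L\'{e}vy property since $N$ depends only on $\varepsilon$, not on $n$ — this is exactly where finiteness of $N(\varepsilon)$ is crucial. The second (tail) sum is bounded by the a priori estimate above, independently of $x$, once $N$ is taken large enough as a function of $\varepsilon$ alone; here the hypothesis $p<q$ is essential, since for a sequence $(a_k)$ with $\sum a_k^p\le 1$ and $0\le a_k\le 1$ one has $\sum_{k>N}a_k^q\le \sup_{k>N}a_k^{q-p}\cdot\sum_{k>N}a_k^p$, and $\sup_{k>N}a_k\to 0$ uniformly... which is \emph{not} automatic — so the genuine mechanism is rather that the L\'{e}vy means already carry almost all of the $\ell^p$-mass, forcing the free coordinates of $f_n(x)$ (those far from their means) to be small in $\ell^q$. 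The main obstacle, and the place requiring the most care, is precisely this tail estimate: making rigorous, uniformly in $n$, that outside a set of small measure the "non-concentrated" part of $f_n(x)$ has negligible $\ell^q$-norm, using only the pointwise constraint $\|f_n(x)\|_{\ell^p}\le 1$ together with the concentration of each coordinate. I expect this to hinge on a pigeonhole/summation argument exploiting $\sum_k (m_{f_n}^{(k)})^p\lesssim 1$ to cap the number of "large-mean" coordinates by a bound depending only on $\varepsilon$, after which both pieces are controlled as above and the proof of Theorem~\ref{mth1} concludes by letting first $n\to\infty$ and then $\varepsilon\to 0$.
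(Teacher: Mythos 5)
Your easy direction is fine and matches the paper (which uses Lemma \ref{pll1}). But the substantive direction has a genuine gap, and it is exactly at the point you flag yourself: the tail estimate. Coordinate-wise concentration of the maps $f_n^{(k)}$ around L\'{e}vy means, a union bound over the finitely many ``large-mean'' coordinates, and the pointwise constraint $\|f_n(x)\|_{\ell^p}\leq 1$ do \emph{not} together control $\sum_{k>N}|f_n^{(k)}(x)|^q$ (or its analogue over the small-mean coordinates) outside a small set. The obstruction is the wandering-support phenomenon: the indices where $f_n(x)$ has a large coordinate may depend on $x$. Concretely, if one could partition $X_n$ into many pieces $P_1,P_2,\dots$ of small measure and map $P_j$ near the $j$-th coordinate direction, then every coordinate function would concentrate near $0$, all L\'{e}vy means would be essentially $0$ and trivially carry ``$\ell^p$-mass at most $1$'', yet the map would not concentrate at all. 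Your fallback claim that ``the L\'{e}vy means already carry almost all of the $\ell^p$-mass, forcing the free coordinates of $f_n(x)$ to be small in $\ell^q$'' is precisely what fails in this scenario; the reason such a map cannot be $1$-Lipschitz on a L\'{e}vy family is not visible coordinate-by-coordinate (a union bound over infinitely many small-mean coordinates gives nothing), but only through a global separation-distance argument on the pushforward measure. Also, a minor point: the large-mean coordinates need not be the \emph{first} $N$ ones, only at most $N(\varepsilon)$ in number, with positions depending on $n$; this is fixable, unlike the tail issue.

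The paper's proof supplies exactly the missing global mechanism, and it is genuinely different from your scheme. First, the Gournay--Tsukamoto map $F_{\infty,\varepsilon}$ (Theorem \ref{p3.1}) moves every point of $(B_{\ell^p}^{\infty},\dist_{\ell^q})$ within $\varepsilon/2$ of the set $A_{k(\varepsilon)}$ of points supported on at most $k(\varepsilon)$ coordinates (this uses $p<q$ through the sorted-coordinate estimate, and is the rigorous form of your observation that each individual $f_n(x)$ is $\ell^q$-close to its few largest coordinates); moreover $F_{\infty,\varepsilon}$ is $(1+k(\varepsilon)^{1/q})$-Lipschitz (Lemma \ref{l3.1}), so the pushforward measures $\nu_{n,k}=(F_{\infty,\varepsilon}\circ f_n)_{\ast}(\mu_{X_n})$ on $A_{k(\varepsilon)}$ inherit vanishing separation distances from the L\'{e}vy property (Lemma \ref{l2.1} and Corollary \ref{c2.1.1}). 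Second, and this is the heart of the matter, Lemma \ref{l3.2} shows by induction on $k$ --- with a connected-components argument and a projection onto $A_{k-1}$ --- that measures on $A_k$ with vanishing separation distances have vanishing partial diameters, even though $A_k$ consists of infinitely many $k$-dimensional pieces; this is what rules out the mass spreading over different coordinate blocks. Your plan has no substitute for this step, so as written the proof does not go through.
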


     As a result, we obtain the example of the infinite dimensional
     target metric space such that the
     concentration to the space happens as often as the concentration to
     the real line.

     The proof of the sufficiency of Theorem \ref{mth1} is
     easy. A. Gournay and M. Tsukamoto's observations play important roles for the proof of the converse
     (\cite{gournay}, \cite{tsukamoto}). Answering a question of Gromov
     in \cite[Section 1.1.4]{grotop},
     Tsukamoto proved in \cite{tsukamoto} that the
     ``macroscopic'' dimension of the space
     $(B_{\ell^p}^{\infty}, \dist_{\ell^q})$ for $1\leq p<q\leq
     +\infty$ is finite. Gournay independently proved it in \cite{gournay} in the case of $q=+\infty$. For any $p$ and $q$ with $1\leq q\leq p\leq
     +\infty$, we have an
     example of a L\'{e}vy family which does
     not satisfy (\ref{smth1}) (see Proposition \ref{p4.1}).

     As applications of Theorem \ref{mth1}, by virtue of \cite[Propositions 4.3 and
     4.4]{funagr}, we obtain the following corollaries of a L\'{e}vy
     group action. A L\'{e}vy group was first introduced by Gromov and
     Milman in \cite{milgro}. Let a topological group $G$ acts on a metric space $X$. The action
    is called \emph{bounded} if for any $\varepsilon >0$ there exists a
    neighborhood $U$ of the identity element $e_{G}\in G$ such that
    $\dist_X(x,gx)<\varepsilon$ for any $g\in U$ and $x\in X$. Note that
 every bounded action is continuous. We say
    that the
    topological group $G$ acts on $X$ \emph{by uniform isomorphisms} if
    for each $g\in G$, the map
    $X\ni x\mapsto gx\in X$ is uniform continuous. The
    action is said to be \emph{uniformly equicontinuous} if for any
    $\varepsilon > 0$ there exists $\delta>0$ such that $\dist_X(gx,gy)<
    \varepsilon$ for every $g\in G$ and $x,y\in X$ with $\dist_X(x,y)<
 \delta$. Given a subset $S\subseteq G$ and $x\in X$, we put $S x:=\{gx
 \mid g\in S\}$.
     \begin{cor}Let $1\leq p < q \leq +\infty$ and assume that a
      L\'{e}vy group $G$ boundedly acts on the metric space $(B^{\infty}_{\ell^p},\dist_{\ell^q})$ by uniform isomorphisms. Then for
      any compact subset $K\subseteq G$ and any $\varepsilon >0$, there
      exists a point $x_{\varepsilon, K} \in B_{\ell^p}^{\infty}$ such
      that $\diam (K x_{\varepsilon, K})\leq \varepsilon$.
      \end{cor}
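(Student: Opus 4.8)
The plan is to combine Theorem~\ref{mth1} with the standard argument for bounded actions of L\'{e}vy groups that underlies \cite[Propositions 4.3 and 4.4]{funagr}. Recall that a L\'{e}vy group $G$ comes with an increasing chain of compact subgroups $G_1\subseteq G_2\subseteq\cdots$ with $\overline{\bigcup_{n}G_n}=G$, a compatible right-invariant metric $d$, and the normalized Haar measures $\nu_n$ on the $G_n$, such that $\{(G_n,d,\nu_n)\}_{n}$ is a L\'{e}vy family. First I would fix a base point $x_0\in B_{\ell^p}^\infty$ and look at the orbit maps $\varphi_n\colon G_n\to B_{\ell^p}^\infty$, $\varphi_n(g)=gx_0$. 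Writing $h=ug$ with $u=hg^{-1}$, using right-invariance of $d$ (so that $d(g,h)=d(e_G,u)$) together with the identity $\dist_{\ell^q}(\varphi_n(g),\varphi_n(h))=\dist_{\ell^q}(gx_0,u(gx_0))$, the boundedness of the action yields: for every $\eta>0$ there is $\delta>0$ such that $d(g,h)<\delta$ implies $\dist_{\ell^q}(\varphi_n(g),\varphi_n(h))<\eta$, uniformly in $n$. Thus the $\varphi_n$ are uniformly continuous with one common modulus $\omega$, and after replacing $d$ by the (concave) metric $\omega\circ d$ --- which keeps $\{(G_n,\omega\circ d,\nu_n)\}_{n}$ a L\'{e}vy family, since uniformly continuous functions sharing a common modulus still concentrate --- each $\varphi_n$ becomes $1$-Lipschitz.

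Next I would push the measures forward and set $Z_n:=\bigl(B_{\ell^p}^\infty,\dist_{\ell^q},(\varphi_n)_{\ast}\nu_n\bigr)$. Since $\varphi_n$ is $1$-Lipschitz on the L\'{e}vy family $\{(G_n,\omega\circ d,\nu_n)\}_n$, composing with any $1$-Lipschitz function $B_{\ell^p}^\infty\to\mathbb{R}$ shows that $\{Z_n\}_n$ is again a L\'{e}vy family. Theorem~\ref{mth1} then gives $\obs_{(B_{\ell^p}^\infty,\dist_{\ell^q})}(Z_n;-\kappa)\to 0$ for every $\kappa>0$, and feeding the $1$-Lipschitz identity map of $B_{\ell^p}^\infty$ into the observable-diameter criterion recalled in the introduction produces points $m_n\in B_{\ell^p}^\infty$ with
\begin{align*}
\lim_{n\to\infty}\nu_n\bigl(\{\, g\in G_n \mid \dist_{\ell^q}(gx_0,m_n)\geq\eta \,\}\bigr)=0\qquad\text{for every }\eta>0 .
\end{align*}

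Finally, given a compact $K\subseteq G$ and $\varepsilon>0$, I would use boundedness once more to pick a neighbourhood $U$ of $e_G$ with $\dist_{\ell^q}(y,uy)<\varepsilon/3$ for all $u\in U$ and all $y\in B_{\ell^p}^\infty$. Density of $\bigcup_n G_n$ makes the sets $Ug$, $g\in\bigcup_n G_n$, an open cover of $G$, so compactness gives $K\subseteq\bigcup_{i=1}^{m}Ug_i$ with $g_1,\dots,g_m\in G_{N_0}$ for some $N_0$. Put $\eta:=\varepsilon/6$ and --- crucially \emph{after} the cover has been fixed --- choose $N\geq N_0$ so large that $\nu_N(A_N)>m/(m+1)$, where $A_N:=\{\, g\in G_N\mid\dist_{\ell^q}(gx_0,m_N)<\eta \,\}$. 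Since $g_1,\dots,g_m\in G_N$ and $\nu_N$ is bi-invariant, each $g_i^{-1}A_N$ has $\nu_N$-measure exceeding $m/(m+1)$, hence $\bigcap_{i=1}^{m}g_i^{-1}A_N\neq\emptyset$; pick $g^{\ast}$ in it and set $x_{\varepsilon,K}:=g^{\ast}x_0\in B_{\ell^p}^\infty$. For $k\in K$, write $k=ug_i$ with $u\in U$: then $\dist_{\ell^q}(kx_{\varepsilon,K},g_ig^{\ast}x_0)=\dist_{\ell^q}\bigl(u(g_ig^{\ast}x_0),g_ig^{\ast}x_0\bigr)<\varepsilon/3$, while $g_ig^{\ast}\in A_N$ gives $\dist_{\ell^q}(g_ig^{\ast}x_0,m_N)<\varepsilon/6$, so $\dist_{\ell^q}(kx_{\varepsilon,K},m_N)<\varepsilon/2$ for all $k\in K$ and therefore $\diam(Kx_{\varepsilon,K})\leq\varepsilon$.

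I expect the main obstacle to be the first step: promoting the merely equicontinuous orbit maps to $1$-Lipschitz maps on a L\'{e}vy family, i.e.\ checking that reparametrizing by $\omega\circ d$ does not destroy the L\'{e}vy property. A second, more bookkeeping-type, point is getting the order of the quantifiers right in the last step --- the finite cover of $K$ must be extracted before the index $N$ is chosen large, which is exactly what forces us to use that the $G_n$ increase (so that all the $g_i$ lie in $G_N$ for every large $N$).
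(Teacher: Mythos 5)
Your proposal is correct and follows essentially the same route as the paper: the paper obtains this corollary by combining Theorem \ref{mth1} with \cite[Propositions 4.3 and 4.4]{funagr}, cited as a black box, and your argument is exactly that combination, with the cited transfer principle reproved in-line via the standard Gromov--Milman orbit-map argument (common modulus for $g\mapsto gx_0$ from boundedness plus right-invariance, concave reparametrization of the metric so that the orbit maps become $1$-Lipschitz while the L\'{e}vy property of $\{(G_n,\nu_n)\}$ is preserved, concentration of $(\varphi_n)_{\ast}\nu_n$ via Theorem \ref{mth1}, then the finite cover of $K$ and the intersection of the translates $g_i^{-1}A_N$ under the bi-invariant Haar measure). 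The steps you flag as delicate do go through in the standard metrizable setting of \cite{funagr}, so the only difference from the paper is that you supply the proof of the group-action transfer instead of citing it.
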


     \begin{cor}There are no non-trivial bounded uniformly
      equicontinuous actions of a L\'{e}vy group to the metric space
      $(B^{\infty}_{\ell^{p}}, \dist_{\ell^q})$ for $1\leq p< q\leq +\infty$.
      \end{cor}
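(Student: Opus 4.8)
The plan is to argue by contradiction, feeding Theorem~\ref{mth1} into the Lévy-group machinery; let me describe the mechanism directly (it runs parallel to \cite[Propositions 4.3 and 4.4]{funagr}, which give the preceding corollary and this one respectively). Recall that a Lévy group $G$ carries an increasing chain $G_1\subseteq G_2\subseteq\cdots$ of compact subgroups with $\overline{\bigcup_n G_n}=G$ for which $\{(G_n,d_n,\mu_n)\}_n$ is a Lévy family, $\mu_n$ being the normalized Haar measure and $d_n$ the invariant metric on $G_n$. Assume $G$ acts boundedly and uniformly equicontinuously on $Y:=(B_{\ell^p}^{\infty},\dist_{\ell^q})$, with a common modulus of equicontinuity $\omega$, and suppose the action is not trivial. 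A bounded action is continuous, so $(g,x)\mapsto\dist_{\ell^q}(x,gx)$ is continuous on $G\times Y$ and strictly positive somewhere; since $\bigcup_n G_n$ is dense in $G$, I can fix $N$, a point $y_0\in Y$ and an element $g_0\in G_N$ with $\dist_{\ell^q}(y_0,g_0y_0)>0$.

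First I would push Haar forward by the orbit maps $\phi_n\colon G_n\to Y$, $\phi_n(h)=hy_0$, setting $\nu_n:=(\phi_n)_*\mu_n$. The maps $\phi_n$ are equi-uniformly-continuous (boundedness of the action), so after the harmless rescaling of the metrics $d_n$ that leaves the Lévy-family property intact, Theorem~\ref{mth1} applied to $\{G_n\}$ gives $\obs_{(B_{\ell^p}^{\infty},\dist_{\ell^q})}(G_n;-\kappa)\to 0$ for every $\kappa>0$; fixing $\kappa:=1/3$, this produces Borel sets $S_n\subseteq Y$ with $\nu_n(S_n)\geq 2/3$ and $\diam S_n\leq\varepsilon_n$ for some $\varepsilon_n\to 0$. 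Now comes the key point: $\nu_n$ is $G_n$-invariant, which is immediate from the left-invariance of $\mu_n$ and the equivariance $\phi_n(gh)=g\phi_n(h)$. Since $\nu_n$ is carried by the single orbit $G_ny_0$, on which $G_n$ acts transitively, I may take $S_n\subseteq G_ny_0$; picking $z_0\in S_n$ gives $\nu_n(B(z_0,\varepsilon_n))\geq 2/3$ for the $\dist_{\ell^q}$-ball, and then for an arbitrary orbit point $z=gz_0$ ($g\in G_n$) uniform equicontinuity gives $g\cdot B(z_0,\varepsilon_n)\subseteq B(z,\omega(\varepsilon_n))$ while $G_n$-invariance gives $\nu_n(g\cdot B(z_0,\varepsilon_n))=\nu_n(B(z_0,\varepsilon_n))$, so $\nu_n(B(z,\omega(\varepsilon_n)))\geq 2/3$ for \emph{every} $z\in G_ny_0$. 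Applying this to $z=y_0$ and $z=g_0y_0$ --- both in $G_ny_0$ because $g_0\in G_N\subseteq G_n$ --- the two $\omega(\varepsilon_n)$-balls cannot be disjoint, so $\dist_{\ell^q}(y_0,g_0y_0)<2\omega(\varepsilon_n)$; letting $n\to\infty$ forces $g_0y_0=y_0$, contradicting the choice of $g_0,y_0$. As $g_0$ ranged over $\bigcup_n G_n$, density of that union together with continuity of the action then yields $gx=x$ for all $g\in G$ and $x\in Y$.

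The hard part is the step that turns ``most of the orbit $G_ny_0$ sits in one tiny ball'' --- which is exactly what the vanishing observable diameter provides --- into ``the entire orbit $G_ny_0$ is tiny''; this is where the group is genuinely needed, via the $G_n$-invariance of $\nu_n$ and transitivity on the orbit, which let a single ball of large measure be transported around the whole orbit while uniform equicontinuity controls its radius. The one technical point I would leave to the cited work is the claim that Theorem~\ref{mth1}'s concentration statement still controls the push-forwards of the merely uniformly continuous orbit maps (and, for the preceding corollary, the corresponding statement about compact orbits), together with the bookkeeping of the metrics $d_n$; this is precisely what \cite[Propositions 4.3 and 4.4]{funagr} supply, so in the end one need only check that Theorem~\ref{mth1} verifies their hypotheses. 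The substantive new input is Theorem~\ref{mth1} itself, which rests on the finiteness of the macroscopic dimension of $(B_{\ell^p}^{\infty},\dist_{\ell^q})$ established by Tsukamoto and Gournay; the Lévy-group conclusions are then extracted by now-standard arguments.
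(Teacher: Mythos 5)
Your proposal is correct and follows essentially the same route as the paper: the paper proves this corollary simply by combining Theorem \ref{mth1} with \cite[Propositions 4.3 and 4.4]{funagr}, which is exactly the reduction you make, and your sketch of the internal mechanism (pushing Haar measures of the compact subgroups forward along orbit maps, using $G_n$-invariance plus uniform equicontinuity to spread a high-measure small ball over the whole orbit) is a faithful reconstruction of that cited machinery. The only technical point you leave open --- passing from $1$-Lipschitz concentration in Theorem \ref{mth1} to the merely equi-uniformly continuous orbit maps --- is precisely what the cited propositions of \cite{funagr} handle, so your deferral matches the paper's own level of detail.
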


Gromov and Milman pointed out in \cite{milgro} that the unitary group $U(\ell^2)$ of the separable
     Hilbert space $\ell^2$ with the strong topology is a L\'{e}vy
    group. Many concrete examples of L\'{e}vy groups are known by the
    works of S. Glasner \cite{gla}, H. Furstenberg and B. Weiss
    (unpublished), T. Giordano and V. Pestov \cite{giopes1}, \cite{giopes2}, and
    Pestov \cite{pestov1}, \cite{pestov3}. For examples, groups of measurable maps from the
    standard Lebesgue measure space to compact groups, unitary groups of some
    von Neumann algebras, groups of measure and measure-class preserving
    automorphisms of the standard Lebesgue measure space, full groups of
    amenable equivalence relations, and the isometry groups of the
    universal Urysohn metric spaces are L\'{e}vy groups (see the recent
    monograph \cite{pestov2} for precise). 

     \section{Preliminaries}

Let $Y$ be a metric space and $\nu$ a Borel measure on $Y$ such that $m:=\nu(Y)<+\infty$. 
	 We define for any $\kappa >0$
	 \begin{align*}
	  \diam (\nu , m-\kappa):= \inf \{ \diam Y_0 \mid Y_0 \subseteq Y \text{ is a Borel subset such that }\nu(Y_0)\geq m-\kappa\}
	  \end{align*}and call it the \emph{partial diameter} of $\nu$.

     \begin{dfn}[Observable diameter]\upshape Let $(X,\dist_X,\mu_X)$ be
      an mm-space with $m_X:=\mu_X(X)$ and $Y$ a metric space. For any $\kappa >0$ we
	 define the \emph{observable diameter} of $X$ by 
	 \begin{align*}
	  \obs_Y (X; -\kappa):=
	   \sup \{ \diam (f_{\ast}(\mu_X),m_X-\kappa) \mid f:X\to Y \text{ is a
      }1 \text{{\rm -Lipschitz map}}  \}, 
      \end{align*}where $f_{\ast}(\mu_X)$ stands for the push-forward
      measure of $\mu_X$ by $f$.
      \end{dfn}
      The idea of the observable diameter comes from the quantum and statistical
	mechanics, that is, we think of $\mu_X$ as a state on a configuration
	space $X$ and $f$ is interpreted as an observable.

     Let $(X,\dist_X,\mu_X)$ be an
      mm-space. For any $\kappa_1,  \kappa_2\geq 0$, we
      define the \emph{separation distance} $\sep
      (X;\kappa_1,\kappa_2)= \sep (\mu_X;\kappa_1,\kappa_2)$ of $X$ as the supremum of
      the distance $\dist_X(A,B):=\inf \{ \dist_X(a,b) \mid a\in A
      \text{ and }b\in B\}$, where $A$ and $B$ are Borel subsets
      of $X$ satisfying that $\mu_X(A)\geq \kappa_1$ and $\mu_X(B)\geq \kappa_2$.

      \begin{lem}[cf.~{\cite[Section $3\frac{1}{2}.33$]{gromov}}]\label{l2.1}
	 Let $X$ and $Y $ be two mm-spaces and $\alpha >0$. Assume that an $\alpha$-Lipschitz map $f : X\to Y$
	 satisfies $f_{\ast}(\mu_X)= \mu_Y$. Then we have 
	 \begin{align*}
	  \sep (Y; \kappa_1, \kappa_2) \leq \alpha \sep (X ; \kappa_1,\kappa_2).
	 \end{align*}
	\end{lem}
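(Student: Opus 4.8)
The plan is to transport competitor sets from $Y$ back to $X$ along $f$ and exploit the measure-preserving hypothesis. We may as well assume $\kappa_1,\kappa_2>0$, since for $\kappa_1=0$ or $\kappa_2=0$ the claimed inequality is either trivial or vacuous, depending on the convention one adopts for the distance to the empty set.

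First I would fix arbitrary Borel subsets $A,B\subseteq Y$ with $\mu_Y(A)\geq\kappa_1$ and $\mu_Y(B)\geq\kappa_2$, and pass to their preimages $f^{-1}(A),f^{-1}(B)\subseteq X$. Since $f$ is $\alpha$-Lipschitz it is continuous, hence Borel measurable, so these preimages are Borel subsets of $X$; and the hypothesis $f_{\ast}(\mu_X)=\mu_Y$ gives $\mu_X(f^{-1}(A))=\mu_Y(A)\geq\kappa_1$ and, likewise, $\mu_X(f^{-1}(B))\geq\kappa_2$. In particular both preimages are nonempty, so the pair $f^{-1}(A),f^{-1}(B)$ is admissible in the definition of $\sep(X;\kappa_1,\kappa_2)$.

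Next I would compare distances pointwise: for every $x\in f^{-1}(A)$ and $y\in f^{-1}(B)$ we have $f(x)\in A$ and $f(y)\in B$, whence
\begin{align*}
\dist_Y(A,B)\leq\dist_Y(f(x),f(y))\leq\alpha\,\dist_X(x,y).
\end{align*}
Taking the infimum over all such $x$ and $y$ gives $\dist_Y(A,B)\leq\alpha\,\dist_X(f^{-1}(A),f^{-1}(B))\leq\alpha\,\sep(X;\kappa_1,\kappa_2)$, the last step being the definition of the separation distance applied to the admissible pair just produced. Finally, taking the supremum over all admissible pairs $A,B\subseteq Y$ yields $\sep(Y;\kappa_1,\kappa_2)\leq\alpha\,\sep(X;\kappa_1,\kappa_2)$, as desired.

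The argument is essentially bookkeeping, and I do not anticipate a genuine obstacle; the only point requiring any care is checking that the pulled-back sets really are competitors in $\sep(X;-,-)$, which is exactly where Borel measurability of $f$ and the equality $f_{\ast}(\mu_X)=\mu_Y$ enter.
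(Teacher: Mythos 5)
Your proof is correct. The paper states this lemma without proof (it is only cited to Gromov's book), and your argument --- pulling the competitor sets $A,B\subseteq Y$ back along $f$, using $f_{\ast}(\mu_X)=\mu_Y$ to check admissibility and the $\alpha$-Lipschitz bound to compare distances before taking the infimum and supremum --- is exactly the standard argument one would supply, with the measurability and degenerate-$\kappa$ points handled appropriately.
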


     Relationships between the observable diameter and the separation distance
     are followings. We refer to \cite[Subsection 2.2]{funad} for precise proofs.
     	\begin{lem}[{cf.~\cite[Section $3\frac{1}{2}.33$]{gromov}}]\label{l2.2}Let $X$ be an mm-space and $\kappa,\kappa' >0$ with $\kappa > \kappa'$. Then we have
	 \begin{align*}
	  \obs_{\mathbb{R}} (X ;-\kappa')\geq \sep (X;\kappa,\kappa).
	  \end{align*}
         \end{lem}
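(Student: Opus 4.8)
The plan is to prove Lemma \ref{l2.2} directly from the definitions, constructing for a given pair of separated Borel sets a suitable $1$-Lipschitz function to $\mathbb{R}$ whose push-forward measure has large partial diameter. So let $\kappa > \kappa' > 0$ be given, and let $A, B \subseteq X$ be Borel subsets with $\mu_X(A) \geq \kappa$ and $\mu_X(B) \geq \kappa$; by the definition of the separation distance it suffices to show $\obs_{\mathbb{R}}(X; -\kappa') \geq \dist_X(A,B)$ whenever the left-hand quantity is finite (the infinite case being trivial). Set $r := \dist_X(A,B)$ and consider the function $f : X \to \mathbb{R}$ defined by $f(x) := \min\{\dist_X(x, A), r\}$. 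This is $1$-Lipschitz, since $\dist_X(\cdot, A)$ is $1$-Lipschitz and truncation at the constant $r$ is $1$-Lipschitz. Moreover $f \equiv 0$ on $A$ and $f \equiv r$ on $B$ (the latter because every point of $B$ is at distance $\geq r$ from $A$).

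**Next I would** estimate the partial diameter of $\nu := f_{\ast}(\mu_X)$ at level $m_X - \kappa'$. Suppose $Y_0 \subseteq \mathbb{R}$ is a Borel set with $\nu(Y_0) \geq m_X - \kappa'$, i.e.\ $\mu_X(f^{-1}(Y_0)) \geq m_X - \kappa'$. Since $\mu_X(X \setminus f^{-1}(Y_0)) \leq \kappa' < \kappa \leq \mu_X(A)$ and likewise $< \mu_X(B)$, the set $f^{-1}(Y_0)$ must meet both $A$ and $B$; hence $Y_0$ contains both the value $0$ (from a point of $A \cap f^{-1}(Y_0)$) and the value $r$ (from a point of $B \cap f^{-1}(Y_0)$), so $\diam Y_0 \geq r$. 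Taking the infimum over all such $Y_0$ gives $\diam(\nu, m_X - \kappa') \geq r$, and therefore $\obs_{\mathbb{R}}(X; -\kappa') \geq \diam(f_{\ast}(\mu_X), m_X - \kappa') \geq r = \dist_X(A,B)$. Taking the supremum over all admissible pairs $(A,B)$ yields $\obs_{\mathbb{R}}(X; -\kappa') \geq \sep(X; \kappa, \kappa)$, as claimed.

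**The only subtle point**, and the one I would be most careful about, is the strict inequality $\kappa > \kappa'$: it is exactly what forces $f^{-1}(Y_0)$ to intersect each of $A$ and $B$, since removing a set of measure at most $\kappa'$ from $X$ cannot exhaust a set of measure at least $\kappa$. If one only had $\kappa \geq \kappa'$ this argument would break down (a set of measure exactly $\kappa'$ could coincide with the complement of one of the large sets). Everything else is routine: the Lipschitz bound on the truncated distance function, the measurability of $f$ and of $f^{-1}(Y_0)$, and the bookkeeping with push-forward measures. No appeal to Lemma \ref{l2.1} or to the structure of the target is needed here — the statement is genuinely elementary and self-contained, which is why the paper only cites Gromov and \cite{funad} for it rather than reproving it in detail.
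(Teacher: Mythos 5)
Your proof is correct: the truncated distance function $f(x)=\min\{\dist_X(x,A),r\}$ is $1$-Lipschitz, and the strict inequality $\kappa>\kappa'$ is exactly what guarantees that any Borel subset of $\mathbb{R}$ carrying $f_{\ast}(\mu_X)$-measure at least $m_X-\kappa'$ must contain both the value $0$ and the value $r=\dist_X(A,B)$, so its diameter is at least $r$. The paper itself does not prove Lemma \ref{l2.2} in the text (it defers to \cite[Subsection 2.2]{funad}), and your argument is precisely the standard one behind that reference, so the approach matches what the paper relies on.
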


         	\begin{rem}\upshape In {\cite[Section $3\frac{1}{2}.33$]{gromov}}, Lemma \ref{l2.2} is stated as $\kappa =\kappa'$, but that is not true in general. For example, let
	 $X:=\{ x_1 , x_2\}$, $\dist_X (x_1,x_2):=1$, and $\mu_X (\{ x_1\})=\mu_X (\{  x_2   \}):= 1/2$. Putting $\kappa =\kappa'=1/2$, we have
	 $\obs_{\mathbb{R}} (X;-1/2)=0$ and $\sep
	 (X;1/2,1/2)=1$.
	 \end{rem}

   \begin{lem}[cf.~{\cite[Section
    $3\frac{1}{2}.33$]{gromov}}]\label{l2.3}Let $\nu$ be a Borel measure on
    $\mathbb{R}$ with $m:=\nu(\mathbb{R})<+\infty$. Then, for any $\kappa >0$ we have
	 \begin{align*}
	  \diam (\nu, m-2\kappa)\leq \sep (\nu; \kappa, \kappa).
      \end{align*}
      In particular, for any $\kappa >0$ we have
      \begin{align*}
        \obs_{\mathbb{R}}(X;-2\kappa)\leq \sep (X; \kappa, \kappa).
       \end{align*}
    \end{lem}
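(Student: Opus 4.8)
The plan is to prove the first inequality $\diam(\nu, m-2\kappa)\leq \sep(\nu;\kappa,\kappa)$ directly from the definitions, exploiting the linear order on $\mathbb{R}$, and then derive the statement about $\obs_{\mathbb{R}}$ by applying the push-forward measure $f_*(\mu_X)$ for a $1$-Lipschitz map $f\colon X\to\mathbb{R}$ (noting that a $1$-Lipschitz map does not increase diameters, so $\diam(f_*(\mu_X), m_X-2\kappa)$ is controlled by the partial diameter bound, and $\sep(f_*(\mu_X);\kappa,\kappa)\leq \sep(X;\kappa,\kappa)$ by Lemma \ref{l2.1} with $\alpha=1$).

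For the main inequality, set $s:=\sep(\nu;\kappa,\kappa)$; I may assume $s<+\infty$, otherwise there is nothing to prove. I would consider the distribution function and pick the relevant quantiles: define
\begin{align*}
a:=\sup\{t\in\mathbb{R}\mid \nu((-\infty,t))\leq \kappa\},\qquad b:=\inf\{t\in\mathbb{R}\mid \nu((t,+\infty))\leq \kappa\}.
\end{align*}
The idea is that the set $Y_0:=[a,b]\cap\mathbb{R}$ (intersected appropriately, taking care of the $\pm\infty$ cases when $\kappa\geq m$, which are trivial) carries measure at least $m-2\kappa$, since its complement splits into a left tail of measure $\leq\kappa$ and a right tail of measure $\leq\kappa$; this uses right/left continuity of the distribution function and the definitions of $a,b$ as suprema/infima. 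Then I would show $b-a\leq s$: if we had $b-a>s$, then choosing $a'$ slightly above $a$ and $b'$ slightly below $b$ with $b'-a'>s$ still, the sets $A:=(-\infty,a']$ and $B:=[b',+\infty)$ would satisfy $\nu(A)>\kappa\geq\kappa$ and $\nu(B)>\kappa$ (by the definition of $a$ as a supremum and $b$ as an infimum) while $\dist_{\mathbb{R}}(A,B)=b'-a'>s=\sep(\nu;\kappa,\kappa)$, contradicting the definition of the separation distance. Hence $\diam Y_0=b-a\leq s$, and since $\nu(Y_0)\geq m-2\kappa$ we conclude $\diam(\nu,m-2\kappa)\leq s$.

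The main obstacle is purely bookkeeping: handling the boundary behaviour of the distribution function carefully (open versus closed half-lines, the strict inequalities needed to produce a genuine contradiction with $\sep$, and the degenerate cases $2\kappa\geq m$ or $\kappa\geq m$ where the partial diameter is $0$ or the sets $A,B$ may be empty). Once the quantiles $a,b$ are chosen with the right one-sided continuity conventions, the two claims $\nu([a,b])\geq m-2\kappa$ and $b-a\leq\sep(\nu;\kappa,\kappa)$ follow immediately, and the $\obs_{\mathbb{R}}$ statement is then a one-line consequence by taking the supremum over $1$-Lipschitz $f\colon X\to\mathbb{R}$ and invoking Lemma \ref{l2.1}.
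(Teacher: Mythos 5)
Your argument is correct: the quantile construction $a=\sup\{t\mid\nu((-\infty,t))\le\kappa\}$, $b=\inf\{t\mid\nu((t,+\infty))\le\kappa\}$ gives $\nu([a,b])\ge m-2\kappa$ by continuity of the finite measure along monotone unions, the contradiction with the definition of $\sep(\nu;\kappa,\kappa)$ forces $b-a\le\sep(\nu;\kappa,\kappa)$, and the $\obs_{\mathbb{R}}$ statement follows by applying this to $f_{\ast}(\mu_X)$ together with Lemma \ref{l2.1}; the degenerate cases you flag are indeed trivial. The paper itself gives no proof of this lemma (it defers to \cite[Subsection 2.2]{funad}), and your tail-trimming argument is essentially the standard one used there, so there is nothing to correct beyond the routine bookkeeping you already identify.
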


Combining Lemma \ref{l2.2} with Lemma \ref{l2.3}, we obtain the
following corollary:
    \begin{cor}[{cf.~\cite[Section $3\frac{1}{2}.33$]{gromov}}]\label{c2.1.1}A sequence $\{ X_n\}_{n=1}^{\infty}$ of mm-spaces is a
     L\'{e}vy family if and only if $\lim_{n\to \infty}\sep
     (X_n;\kappa,\kappa) =0$ for any $\kappa >0$. 
     \end{cor}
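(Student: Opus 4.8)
The plan is to read off the equivalence directly from the two-sided comparison between the observable diameter on $\mathbb{R}$ and the separation distance supplied by Lemmas \ref{l2.2} and \ref{l2.3}. Since a L\'evy family is defined purely in terms of $\obs_{\mathbb{R}}(X_n;-\kappa)$, nothing beyond these two lemmas is required.

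First I would treat the ``only if'' direction. Assume $\{X_n\}_{n=1}^{\infty}$ is a L\'evy family and fix $\kappa>0$. Pick any $\kappa'$ with $0<\kappa'<\kappa$. Lemma \ref{l2.2} then gives $\sep(X_n;\kappa,\kappa)\leq \obs_{\mathbb{R}}(X_n;-\kappa')$ for every $n$, and the right-hand side tends to $0$ as $n\to\infty$ by the definition of a L\'evy family. Hence $\lim_{n\to\infty}\sep(X_n;\kappa,\kappa)=0$, and since $\kappa>0$ was arbitrary this direction is complete.

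For the ``if'' direction, assume $\lim_{n\to\infty}\sep(X_n;\kappa,\kappa)=0$ for every $\kappa>0$ and fix $\kappa>0$. Applying Lemma \ref{l2.3} with $\kappa/2$ in place of $\kappa$ yields $\obs_{\mathbb{R}}(X_n;-\kappa)\leq \sep(X_n;\kappa/2,\kappa/2)$ for every $n$, and the right-hand side tends to $0$ by hypothesis. Thus $\lim_{n\to\infty}\obs_{\mathbb{R}}(X_n;-\kappa)=0$ for every $\kappa>0$, i.e. $\{X_n\}_{n=1}^{\infty}$ is a L\'evy family.

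I do not expect any real obstacle: the only subtlety is the strict inequality $\kappa'<\kappa$ required by Lemma \ref{l2.2} (the Remark shows the case $\kappa'=\kappa$ genuinely fails), but this is harmlessly absorbed because passing to the limit in $n$ is insensitive to shrinking $\kappa'$, and similarly the factor $2$ in Lemma \ref{l2.3} is absorbed by applying it at $\kappa/2$. All the substance of the corollary is already carried by the two preceding lemmas.
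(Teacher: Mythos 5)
Your proof is correct and is exactly the argument the paper intends: the corollary is stated there as an immediate consequence of combining Lemma \ref{l2.2} (giving $\sep(X_n;\kappa,\kappa)\leq\obs_{\mathbb{R}}(X_n;-\kappa')$ for $\kappa'<\kappa$) with Lemma \ref{l2.3} (giving $\obs_{\mathbb{R}}(X_n;-\kappa)\leq\sep(X_n;\kappa/2,\kappa/2)$). Your handling of the strict inequality $\kappa'<\kappa$ and of the factor $2$ is precisely the right bookkeeping.
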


\begin{lem}\label{l2.4}Let $\nu$ be a finite Borel measure on
 $(\mathbb{R}^k,\dist_{\ell^p})$ with $m:=\nu(\mathbb{R}^k)$. Then for
 any $\kappa>0$ we have
 \begin{align*}
  \diam (\nu,m-\kappa)\leq k^{1/p}\sep \Big(\nu;\frac{\kappa}{2k},\frac{\kappa}{2k}\Big).
  \end{align*}
 \begin{proof}For $i=1,2, \cdots , k$, let $\pr_i:\mathbb{R}^k \ni (x_i)_{i=1}^k \mapsto x_i \in
  \mathbb{R}$ be the projection. For Borel subsets $A_1, A_2, \cdots , A_k \subseteq
  \mathbb{R}$ with $(\pr_i)_{\ast}(\nu)(A_i)\geq \kappa /k$, we have
  \begin{align*}
   \nu (A_1 \times A_2 \times \cdots \times A_k)=\nu \Big(
   \bigcap_{i=1}^k (\pr_i)^{-1}(A_i)\Big)\geq m- \kappa,
   \end{align*}which leads to
  \begin{align*}
   \diam (\nu,m-\kappa)\leq \diam (A_1 \times A_2 \times \cdots \times A_k)
   \leq k^{1/p} \max_{1\leq i \leq k} \diam A_i.
   \end{align*}We therefore get
  \begin{align*}
   \diam (\nu,m-\kappa)\leq k^{1/p} \max_{1\leq i\leq k} \diam \Big((\pr_i)_{\ast}(\nu),m-\frac{\kappa}{k}\Big).
   \end{align*}Combining this with Lemmas \ref{l2.1} and \ref{l2.3}, we obtain
  \begin{align*}
   \diam (\nu,m-\kappa)\leq k^{1/p} \max_{1\leq i\leq k}\sep \Big(
   (\pr_i)_{\ast}(\nu); \frac{\kappa}{2k}, \frac{\kappa}{2k}\Big)
   \leq k^{1/p}\sep \Big(\nu;\frac{\kappa}{2k}, \frac{\kappa}{2k}\Big).
   \end{align*}This completes the proof.
  \end{proof}
 \end{lem}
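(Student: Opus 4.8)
The plan is to reduce the $k$-dimensional estimate to $k$ one-dimensional ones by means of the coordinate projections, paying a factor $k^{1/p}$ for the geometry of the $\ell^p$-metric and a factor $k$ in the mass thresholds for a union bound.

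First I would set, for $i=1,\dots,k$, $\pr_i\colon \mathbb{R}^k\to\mathbb{R}$ to be the $i$-th coordinate projection. Each $\pr_i$ is $1$-Lipschitz as a map $(\mathbb{R}^k,\dist_{\ell^p})\to\mathbb{R}$, and $(\pr_i)_\ast(\nu)$ is a finite Borel measure on $\mathbb{R}$ of total mass $m$. Given $\kappa>0$, fix $\varepsilon>0$ and, for each $i$, choose a Borel set $A_i\subseteq\mathbb{R}$ with $(\pr_i)_\ast(\nu)(A_i)\ge m-\kappa/k$ and $\diam A_i\le\diam\big((\pr_i)_\ast(\nu),m-\kappa/k\big)+\varepsilon$, which is possible since the partial diameter is an infimum.

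Next, since $\nu\big(\mathbb{R}^k\setminus\pr_i^{-1}(A_i)\big)\le\kappa/k$ for every $i$, a union bound over the $k$ complements gives
\begin{align*}
\nu(A_1\times\cdots\times A_k)=\nu\Big(\bigcap_{i=1}^k\pr_i^{-1}(A_i)\Big)\ge m-\kappa.
\end{align*}
For $x,y\in A_1\times\cdots\times A_k$ one has $|x_i-y_i|\le\diam A_i$ for all $i$, hence $\dist_{\ell^p}(x,y)\le k^{1/p}\max_{1\le i\le k}\diam A_i$; therefore
\begin{align*}
\diam(\nu,m-\kappa)\le\diam(A_1\times\cdots\times A_k)\le k^{1/p}\max_{1\le i\le k}\diam A_i,
\end{align*}
and letting $\varepsilon\to0$ yields $\diam(\nu,m-\kappa)\le k^{1/p}\max_{1\le i\le k}\diam\big((\pr_i)_\ast(\nu),m-\kappa/k\big)$.

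Finally I would invoke Lemmas \ref{l2.3} and \ref{l2.1}. Writing $\kappa/k=2\cdot\kappa/(2k)$ and applying Lemma \ref{l2.3} to the one-dimensional measure $(\pr_i)_\ast(\nu)$ gives $\diam\big((\pr_i)_\ast(\nu),m-\kappa/k\big)\le\sep\big((\pr_i)_\ast(\nu);\kappa/(2k),\kappa/(2k)\big)$; applying Lemma \ref{l2.1} with $\alpha=1$ to the $1$-Lipschitz map $\pr_i$, which pushes $\nu$ forward to $(\pr_i)_\ast(\nu)$ by definition, gives $\sep\big((\pr_i)_\ast(\nu);\kappa/(2k),\kappa/(2k)\big)\le\sep\big(\nu;\kappa/(2k),\kappa/(2k)\big)$. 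Chaining these through the previous display completes the argument. No step here is deep; the only thing to watch is the bookkeeping in the reduction — splitting $\kappa/k$ as two copies of $\kappa/(2k)$ so that Lemma \ref{l2.3} applies exactly, the union bound over $k$ sets of complementary mass at most $\kappa/k$, and the elementary inequality $\big(\sum_{i=1}^k d_i^p\big)^{1/p}\le k^{1/p}\max_i d_i$ behind the $\ell^p$-diameter estimate.
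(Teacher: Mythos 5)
Your proposal is correct and follows essentially the same route as the paper: project to coordinates, take a union bound over the $k$ cylinder sets, use the $\ell^p$ estimate $\diam(A_1\times\cdots\times A_k)\le k^{1/p}\max_i\diam A_i$, and then chain Lemma \ref{l2.3} (with $\kappa/k=2\cdot\kappa/(2k)$) and Lemma \ref{l2.1} for the $1$-Lipschitz projections. Your version is in fact slightly cleaner, since you state the correct mass condition $(\pr_i)_\ast(\nu)(A_i)\ge m-\kappa/k$ (the paper's ``$\ge\kappa/k$'' is a typo) and handle the infimum in the partial diameter with an explicit $\varepsilon$.
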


 \begin{lem}\label{pll1}Let $a,b$ be two real numbers with $a<b$. Then, a sequence
  $\{ X_n \}_{n=1}^{\infty}$ of mm-spaces is a L\'{e}vy family if and
  only if
  \begin{align}\label{spll1}
   \lim_{n\to \infty}\obs_{[a,b]}(X_n;-\kappa)=0 \text{ for any }\kappa >0.
   \end{align}
  \begin{proof}The necessity is obvious. We shall prove the
   converse. Suppose that the sequence $\{ X_n\}_{n=1}^{\infty}$ with the property (\ref{spll1}) is not
   a L\'{e}vy family. Then, by Corollary \ref{c2.1.1}, there exists $\kappa
   >0$ and Borel subsets $A_n, B_n\subseteq X_n$ such that $\mu_{X_n}(A_n)\geq \kappa$, $\mu_{X_n}(B_n)\geq \kappa$,
   and $\limsup_{n\to \infty}\dist_{X_n}(A_n,B_n)>0$. Define a
   function $f_n:X_n \to \mathbb{R}$ by
   $f_n(x):=\max\{\dist_{X_n}(x,A_n)+a, b\}$. Since $\mu_{X_n}(B_n)\geq
   \kappa$ and $\limsup_{n\to \infty}\dist_{X_n}(A_n,B_n)>0$, we have
   \begin{align*}
    \limsup_{n\to \infty}\diam ((f_n)_{\ast}(\mu_{X_n}),m_{X_n}-\kappa')>0
    \end{align*}for any $0<\kappa'<\kappa$. Since each $f_n$ is a
   $1$-Lipschitz function, this contradicts the assumption (\ref{spll1}). This completes the proof.
   \end{proof}
  \end{lem}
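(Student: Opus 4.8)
The plan is to prove the two implications separately, with essentially all of the content in the ``if'' direction. For the ``only if'' direction I would note that the isometric inclusion $[a,b]\hookrightarrow\mathbb{R}$ sends any $1$-Lipschitz map $f\colon X_n\to[a,b]$ to a $1$-Lipschitz map $X_n\to\mathbb{R}$ with the same push-forward measure, and since this measure is supported in $[a,b]$ its partial diameters are the same whether computed in $[a,b]$ or in $\mathbb{R}$. Hence $\obs_{[a,b]}(X_n;-\kappa)\le\obs_{\mathbb{R}}(X_n;-\kappa)$ for every $\kappa>0$, and the right-hand side tends to $0$ because $\{X_n\}$ is a L\'evy family; this gives (\ref{spll1}) and finishes the necessity.

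For the ``if'' direction I would argue by contraposition. Suppose $\{X_n\}$ is not a L\'evy family. By Corollary \ref{c2.1.1} there exist $\kappa>0$ and Borel sets $A_n,B_n\subseteq X_n$ with $\mu_{X_n}(A_n)\ge\kappa$, $\mu_{X_n}(B_n)\ge\kappa$ and $\limsup_{n\to\infty}\dist_{X_n}(A_n,B_n)>0$; fix $\delta>0$ so that $\dist_{X_n}(A_n,B_n)>\delta$ for infinitely many $n$. The key step is to replace the (unbounded) separating $1$-Lipschitz function $\dist_{X_n}(\cdot,A_n)$ by its truncation into $[a,b]$: set
\[
 f_n(x):=\min\{\dist_{X_n}(x,A_n)+a,\ b\}.
\]
Then $f_n\colon X_n\to[a,b]$ is $1$-Lipschitz, it equals $a$ on $A_n$, and on $B_n$ it is at least $c_n:=\min\{\dist_{X_n}(A_n,B_n)+a,\ b\}$, because $\dist_{X_n}(x,A_n)\ge\dist_{X_n}(A_n,B_n)$ for $x\in B_n$.

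Next I would estimate the partial diameter of $(f_n)_{\ast}(\mu_{X_n})$. Fix $\kappa'$ with $0<\kappa'<\kappa$. From the description of $f_n$ we get $(f_n)_{\ast}(\mu_{X_n})(\{a\})\ge\kappa$ and $(f_n)_{\ast}(\mu_{X_n})([c_n,b])\ge\kappa$. If $Y_0\subseteq[a,b]$ is Borel with $(f_n)_{\ast}(\mu_{X_n})(Y_0)\ge m_{X_n}-\kappa'$, then its complement has $(f_n)_{\ast}(\mu_{X_n})$-measure at most $\kappa'<\kappa$, so $Y_0$ cannot miss $\{a\}$ and cannot be disjoint from $[c_n,b]$; hence $\diam Y_0\ge c_n-a=\min\{\dist_{X_n}(A_n,B_n),\ b-a\}$. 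Therefore $\diam\big((f_n)_{\ast}(\mu_{X_n}),m_{X_n}-\kappa'\big)\ge\min\{\dist_{X_n}(A_n,B_n),\ b-a\}$, which is $\ge\min\{\delta,b-a\}>0$ for infinitely many $n$. Since each $f_n$ is a $1$-Lipschitz map into $[a,b]$, this yields $\limsup_{n\to\infty}\obs_{[a,b]}(X_n;-\kappa')>0$, contradicting (\ref{spll1}) and completing the proof.

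I do not expect a genuine obstacle here; the whole idea is the truncation, and the remaining work is bookkeeping. The one point to watch is to keep the inequality $\kappa'<\kappa$ strict, so that a set of measure $\ge m_{X_n}-\kappa'$ cannot avoid a set of $f_n$-measure $\ge\kappa$; and the conclusion should be stated with $\limsup$ (or after passing to a subsequence), since $\dist_{X_n}(A_n,B_n)$ is bounded below only along a subsequence. Note also that the truncation still separates $A_n$ from $B_n$ even when $\dist_{X_n}(A_n,B_n)>b-a$: in that case $f_n$ simply takes the values $a$ and $b$ on $A_n$ and $B_n$, giving a set of diameter $b-a>0$.
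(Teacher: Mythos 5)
Your proof is correct and takes essentially the same route as the paper: contraposition via Corollary \ref{c2.1.1}, followed by truncating the separating function $\dist_{X_n}(\cdot,A_n)+a$ into $[a,b]$ and observing that the two sets of measure $\geq\kappa$ force the partial diameter of the push-forward to stay bounded away from zero for $\kappa'<\kappa$. (Your $\min\{\dist_{X_n}(x,A_n)+a,\,b\}$ is the intended truncation; the paper's ``$\max$'' is evidently a typo.)
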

     \section{Proof of the main theorem}

     To prove the main theorem, we extract from Gournay's paper \cite{gournay}
     and Tsukamoto's paper
     \cite{tsukamoto} their arguments.

     For $k\in \mathbb{N}$, we identify
     $\mathbb{R}^k$ with the subset $\{ (x_1, x_2, \cdots, x_k, 0, 0 , \cdots )\in
       \mathbb{R}^{\infty} \mid x_i \in \mathbb{R} \text{ for all }i\}$
     of $\mathbb{R}^{\infty}$. Given $k\in \mathbb{N}\cup \{ \infty\}$,
     let $\mathfrak{S}_k$ be the $k$-th symmetric group. We consider the group $G_{k}:=
     \{\pm 1\}^{k} \rtimes \mathfrak{S}_{k}$. The multiplication in
     $G_{k}$ is given by
     \begin{align*}
      (( \varepsilon_n)_{n=1}^{k}, \sigma )\cdot ((
      \varepsilon'_n    )_{n=1}^{k}, \sigma'):= ((\varepsilon_n
      \varepsilon'_{\sigma^{-1}(n)})_{n=1}^{k}, \sigma \sigma').
      \end{align*}The group $G_k$ acts on the space $\mathbb{R}^{k}$ by
      \begin{align*}
       (( \varepsilon_n)_{n=1}^{k}, \sigma)\cdot (
       x_n)_{n=1}^{k}:= ( \varepsilon_n x_{\sigma^{-1}(n)})_{n=1}^{k}.
       \end{align*}Note that this action preserves the $k$-dimensional $\ell^p$-ball
       $B_{\ell^p}^{k}\subseteq B_{\ell^p}^{\infty}$ and the $\ell^q$-distance function
       $\dist_{\ell^q}$. Define a subset $\varLambda_{k}\subseteq
       B_{\ell^p}^k$ by
       \begin{align*}
        \varLambda_{k}:= \{ x  \in
        B_{\ell^p}^{k} \mid x_{i-1} \geq  x_i \geq 0 \text{ for all }i\}.
        \end{align*}
        Given an arbitrary $\varepsilon >0$, we put $ k(\varepsilon):=
        \lceil ( 2/\varepsilon)^{pq/(q-p)} \rceil -1$, where $\lceil (
        2/\varepsilon)^{pq/(q-p)} \rceil$ denotes the smallest integer
        which is not less than $( 2/\varepsilon)^{pq/(q-p)}$. For $k\geq
        k(\varepsilon)+1$, we define a continuous map $f_{k,\varepsilon}:
        \varLambda_{k }\to \mathbb{R}^{k(\varepsilon)}$ by
        \begin{align*}
         f_{k, \varepsilon} (x):=(x_1-x_{k(\varepsilon)+1}, x_2-x_{k(\varepsilon)+1},
         \cdots , x_{k(\varepsilon)}- x_{k(\varepsilon)+1},0,0 , \cdots).
         \end{align*}For any $x\in B_{\ell^p}^{k}$, taking $g\in
         G_{k}$ such that $gx\in \varLambda_{k}$, we define
         \begin{align*}
          F_{k,\varepsilon}(x):= g^{-1} f_{k,\varepsilon} (g x).
          \end{align*}This definition of the map
          $F_{k,\varepsilon}:B_{\ell^p}^{k} \to B_{\ell^p}^{k}$
          is well-defined (see \cite[Section 2]{tsukamoto} for details).
          Given $k\in \mathbb{N}$, we put $A_k:= \bigcup_{g\in
          G_{\infty}}g \mathbb{R}^k \subseteq \mathbb{R}^{\infty}$. 
       \begin{thm}[{cf.~\cite[Proposition 1.3]{gournay} and \cite[Section 2]{tsukamoto}}]\label{p3.1}The map
        $F_{k,\varepsilon}:B_{\ell^p}^{k}\to B_{\ell^p}^{k}$
        satisfies that $F_{k,\varepsilon}(B^{k}_{\ell^p})\subseteq
        A_{k(\varepsilon)}$ and
        \begin{align}\label{s3.1}
         \dist_{\ell^q}(x,F_{k,\varepsilon}(x))\leq \frac{\varepsilon}{2}
         \end{align}for any $x \in B_{\ell^p}^{k}$.
        \end{thm}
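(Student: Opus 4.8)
The plan is to treat the two assertions of Theorem \ref{p3.1} separately, reducing both to the explicit description of $F_{k,\varepsilon}$ on the fundamental domain $\varLambda_k$. Throughout write $N:=k(\varepsilon)$, so that by definition $N+1=\lceil (2/\varepsilon)^{pq/(q-p)}\rceil\ge (2/\varepsilon)^{pq/(q-p)}$, where we read $pq/(q-p)$ as $p$ when $q=+\infty$.

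For the inclusion $F_{k,\varepsilon}(B_{\ell^p}^{k})\subseteq A_{N}$: by construction $f_{k,\varepsilon}$ takes values in $\mathbb{R}^{N}$, so for $x\in B_{\ell^p}^{k}$ and any $g\in G_k$ with $gx\in\varLambda_k$ we have $F_{k,\varepsilon}(x)=g^{-1}f_{k,\varepsilon}(gx)\in g^{-1}\mathbb{R}^{N}$. Regarding $g^{-1}$ as an element of $G_k\subseteq G_\infty$, this places $F_{k,\varepsilon}(x)$ in $\bigcup_{h\in G_\infty}h\,\mathbb{R}^{N}=A_{N}$. (That $F_{k,\varepsilon}$ is well defined, independently of the choice of $g$, and maps $B_{\ell^p}^k$ into itself, is taken from \cite[Section 2]{tsukamoto}.)

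For the distance estimate (\ref{s3.1}): the $G_k$-action on $\mathbb{R}^{k}$ is by permutations and sign changes of coordinates, hence by $\dist_{\ell^q}$-isometries, so for $g$ with $gx\in\varLambda_k$ one has $\dist_{\ell^q}(x,F_{k,\varepsilon}(x))=\dist_{\ell^q}(gx,f_{k,\varepsilon}(gx))$, and it suffices to bound $\dist_{\ell^q}(x,f_{k,\varepsilon}(x))$ for $x\in\varLambda_k$. From the formula for $f_{k,\varepsilon}$, the $i$-th coordinate of $x-f_{k,\varepsilon}(x)$ equals $x_{N+1}$ for $1\le i\le N$ and equals $x_i$ for $N+1\le i\le k$; moreover $0\le x_i\le x_{N+1}$ for $i\ge N+1$ because $x\in\varLambda_k$. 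For $q<+\infty$, using $x_i^{q}=x_i^{p}x_i^{q-p}\le x_i^{p}x_{N+1}^{q-p}$ for $i\ge N+1$ and $N x_{N+1}^{p}\le\sum_{i=1}^{N}x_i^{p}$, we get
\[
\dist_{\ell^q}(x,f_{k,\varepsilon}(x))^{q}=N x_{N+1}^{q}+\sum_{i=N+1}^{k}x_i^{q}\le x_{N+1}^{q-p}\Big(N x_{N+1}^{p}+\sum_{i=N+1}^{k}x_i^{p}\Big)\le x_{N+1}^{q-p}\sum_{i=1}^{k}x_i^{p}\le x_{N+1}^{q-p};
\]
for $q=+\infty$ one gets directly $\dist_{\ell^\infty}(x,f_{k,\varepsilon}(x))=x_{N+1}$. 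Since $(N+1)x_{N+1}^{p}\le\sum_{i=1}^{N+1}x_i^{p}\le1$ we have $x_{N+1}\le(N+1)^{-1/p}$, so in all cases $\dist_{\ell^q}(x,f_{k,\varepsilon}(x))\le(N+1)^{-(1/p-1/q)}$ (with $1/q=0$ when $q=+\infty$); by the choice of $N$ this is at most $\varepsilon/2$, which together with the equivariance reduction yields (\ref{s3.1}).

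The only genuinely delicate point is the chain of inequalities bounding $\dist_{\ell^q}(x,f_{k,\varepsilon}(x))^{q}$: the crux is to use the monotonicity built into $\varLambda_k$ to trade the possibly long tail $\sum_{i>N}x_i^{q}$ against the available $\ell^p$-mass $\sum_{i\le N}x_i^{p}$, and then to use the specific value of $k(\varepsilon)$ to convert the resulting power of $N+1$ into $\varepsilon/2$; keeping the $q=+\infty$ case consistent with the general exponents is a minor bookkeeping matter. The equivariance reduction and the membership claim are routine once the well-definedness of $F_{k,\varepsilon}$ from \cite{tsukamoto} is granted.
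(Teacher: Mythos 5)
Your proof is correct and follows essentially the same route as the paper, which in fact gives no argument of its own for Theorem \ref{p3.1} but defers to \cite[Proposition 1.3]{gournay} and \cite[Section 2]{tsukamoto}: reduce to $\varLambda_k$ by the $\dist_{\ell^q}$-isometric $G_k$-action, then use the monotonicity of the coordinates and the $\ell^p$-constraint to get $\dist_{\ell^q}(x,f_{k,\varepsilon}(x))\le x_{k(\varepsilon)+1}^{1-p/q}\le (k(\varepsilon)+1)^{-(1/p-1/q)}\le \varepsilon/2$, with the $q=+\infty$ case handled by the same bound $x_{k(\varepsilon)+1}\le (k(\varepsilon)+1)^{-1/p}$. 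Your computation and the choice of exponents check out, so nothing further is needed beyond the well-definedness of $F_{k,\varepsilon}$, which you correctly take from \cite{tsukamoto} just as the paper does.
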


       \begin{lem}\label{l3.1}The map
        $F_{k,\varepsilon}:(B^{k}_{\ell^p},\dist_{\ell^q}) \to 
        (A_{k(\varepsilon)}, \dist_{\ell^q})$ is a $(1+k(\varepsilon)^{1/q})$-Lipschitz map.
        \begin{proof}By the definition of the map $F_{k,\varepsilon}$,
         it suffices to prove that the map
         $F:=F_{2k(\varepsilon)+2,\varepsilon}:(B^{2k(\varepsilon)+2}_{\ell^p},\dist_{\ell^q})
         \to (B^{2k(\varepsilon)+2}_{\ell^p},\dist_{\ell^q})$ is
         $(1+k(\varepsilon)^{1/q})$-Lipschitz. Recall that 
	 \begin{align*}
	  F(x)=(x_1- x_{k(\varepsilon)+1}, x_2-x_{k(\varepsilon)+1},
	  \cdots , x_{k(\varepsilon)}-x_{k(\varepsilon)+1},0,0,\cdots,0 )
	  \end{align*}for any $ x\in \varLambda_{2k(\varepsilon)+2}$. We
         hence get
      \begin{align*}
       \dist_{\ell^q}(F(x),F(y))\leq \dist_{\ell^q}(x,y)+
       k(\varepsilon)^{1/q}|x_{k(\varepsilon)+1}-y_{k(\varepsilon)+1}|\leq (1+k(\varepsilon)^{1/q})\dist_{\ell^q}(x,y)
       \end{align*}for any $x,y\in
         \varLambda_{2k(\varepsilon)+2}$. Since each $g\in G_{2k(\varepsilon)+2}$ preserves the distance function $\dist_{\ell^q}$, the map $F$ is $(1+k(\varepsilon)^{1/q})$-Lipschitz on
         each $g\varLambda_{2k(\varepsilon)+2}$. 

Let $x,y\in B_{\ell^p}^{2k(\varepsilon)+2}$ be arbitrary points. Observe
	 that there exist $t_0:=0\leq t_1 \leq t_2 \leq \cdots
	 \leq t_{i-1} \leq  1=:t_i$ and $g_1,g_2, \cdots , g_i\in
         G_{2k(\varepsilon)+2}$ such that $(1-t)x + ty \in g_j
         \varLambda_{2k(\varepsilon)+2}$ for any $t\in
         [t_{j-1},t_j]$. We therefore obtain
         \begin{align*}
          \dist_{\ell^q}(F(x),F(y))\leq \ &\sum_{j=1}^{i}
          \dist_{\ell^q}(F((1-t_{j-1})x+ t_{j-1}y), F((1-t_j)x+ t_j
          y))\\
          \leq \ & (1+k(\varepsilon)^{1/q}) \sum_{j=1}^{i} \dist_{\ell^q}( (1-t_{j-1})x+ t_{j-1}y
          , (1-t_j)x + t_j y)\\
          = \ &(1+k(\varepsilon)^{1/q})\dist_{\ell^q}(x,y).
          \end{align*}This completes the proof.
         \end{proof}
        \end{lem}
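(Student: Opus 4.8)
The plan is to reduce the bound to a finite-dimensional, piecewise-affine computation. The first observation is that, up to the action of $G_k$, the map $F_{k,\varepsilon}$ reads off only the $k(\varepsilon)+1$ largest (in absolute value) coordinates of its argument: once one sorts $x$ into $\varLambda_k$ by some $g\in G_k$, the formula for $f_{k,\varepsilon}$ involves only the coordinates with indices $\leq k(\varepsilon)+1$. Hence, given $x,y\in B_{\ell^p}^k$, the quantity $\dist_{\ell^q}(F_{k,\varepsilon}(x),F_{k,\varepsilon}(y))$ is governed by a coordinate subspace of dimension at most $2(k(\varepsilon)+1)$; using that coordinate projections are $1$-Lipschitz for $\dist_{\ell^q}$ while coordinate inclusions are $\ell^q$-isometric, it is enough to prove that $F:=F_{2k(\varepsilon)+2,\varepsilon}$ is $(1+k(\varepsilon)^{1/q})$-Lipschitz on $B^{2k(\varepsilon)+2}_{\ell^p}$.

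Next I would handle one chamber. On $\varLambda_m$ with $m=2k(\varepsilon)+2$ the map $F$ is affine, $F(x)=(x_1-x_{k(\varepsilon)+1},\dots,x_{k(\varepsilon)}-x_{k(\varepsilon)+1},0,0,\dots)$, so for $x,y\in\varLambda_m$ the triangle inequality in $\ell^q$ gives
\begin{align*}
\dist_{\ell^q}(F(x),F(y))\leq \Big(\sum_{i=1}^{k(\varepsilon)}|x_i-y_i|^q\Big)^{1/q}+k(\varepsilon)^{1/q}\,|x_{k(\varepsilon)+1}-y_{k(\varepsilon)+1}|\leq (1+k(\varepsilon)^{1/q})\,\dist_{\ell^q}(x,y).
\end{align*}
Since each $g\in G_m$ acts on $(\mathbb{R}^m,\dist_{\ell^q})$ by an isometry and $F$ restricted to $g\varLambda_m$ equals $g\circ f_{k,\varepsilon}\circ g^{-1}$ there, the same bound holds on every chamber $g\varLambda_m$.

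Finally I would globalize by moving along a segment. The cones $\{g\varLambda_m\}_{g\in G_m}$ tile $\mathbb{R}^m$, and $B_{\ell^p}^m$ is convex, so for arbitrary $x,y\in B_{\ell^p}^m$ the segment $[x,y]$ stays in $B_{\ell^p}^m$ and meets only finitely many of these cones; one records breakpoints $0=t_0\leq t_1\leq\dots\leq t_i=1$ with $(1-t)x+ty$ in a single chamber $g_j\varLambda_m$ for $t\in[t_{j-1},t_j]$. Using that $F$ is single-valued (well-definedness of $F_{k,\varepsilon}$), the per-chamber estimates telescope:
\begin{align*}
\dist_{\ell^q}(F(x),F(y))\leq \sum_{j=1}^i \dist_{\ell^q}\big(F((1-t_{j-1})x+t_{j-1}y),\,F((1-t_j)x+t_jy)\big)\leq (1+k(\varepsilon)^{1/q})\,\dist_{\ell^q}(x,y),
\end{align*}
since $\sum_j(t_j-t_{j-1})=1$. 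I expect the only real obstacle to be the geometric bookkeeping of this last step — verifying that the segment crosses only finitely many closed chambers and that consecutive chambers agree on their common boundary, so that the chaining by the triangle inequality is legitimate — together with making sure at the outset that enlarging $k$ to $2k(\varepsilon)+2$ really does capture all coordinates relevant to both $x$ and $y$. The per-chamber $\ell^q$-estimate, by contrast, is routine.
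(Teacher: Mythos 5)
Your proposal is correct and follows essentially the same route as the paper: reduce to $F_{2k(\varepsilon)+2,\varepsilon}$, prove the $(1+k(\varepsilon)^{1/q})$-bound on each chamber $g\varLambda_{2k(\varepsilon)+2}$ via the affine formula and the isometric $G$-action, then chain the estimate along the segment $[x,y]$ through finitely many chambers, using collinearity so the distances telescope to $\dist_{\ell^q}(x,y)$. The bookkeeping you flag (finitely many chambers along the segment, agreement on common boundaries) is exactly the step the paper also treats as an "observe," so nothing is missing relative to the published argument.
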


         The following lemma is a key to prove Theorem \ref{mth1}. 

         \begin{lem}\label{l3.2}Let $k\in \mathbb{N}$ and $\{\nu_{n,k} \}_{n=1}^{\infty}$ be a sequence
          of finite Borel measures on
          $(A_{k},\dist_{\ell^q})$ satisfying that
          \begin{align}\label{sl1}
           \lim_{n\to \infty}\sep (\nu_{n,k};\kappa_1,\kappa_2)=0
           \end{align}for any $\kappa_1,\kappa_2 >0$. Then, putting
          $m_n:=\nu_{n,k}(A_k)$, we have
          \begin{align}\label{sl2}
           \lim_{n\to \infty}\diam (\nu_{n,k},m_n-\kappa)=0
           \end{align}for any $\kappa >0$.
          \begin{proof}It suffices to prove (\ref{sl2}) by choosing a
           subsequence. We shall prove it by induction for $k$.

           For $k=0$, since $A_0= \{ (0,0, \cdots)\}$, we have $\diam
           (\nu_{n,0},m_n- \kappa)=0$.

           Assume that (\ref{sl2}) holds for any sequence $\{
           \nu_{n,k-1}\}_{n=1}^{\infty}$ of finite Borel measures on $(A_{k-1},\dist_{\ell^q})$
           having the property (\ref{sl1}). Let $\{
           \nu_{n,k}\}_{n=1}^{\infty}$ be any sequence of finite Borel
           measures on $(A_{k},\dist_{\ell^q})$ having the property
           (\ref{sl1}). Since $\lim_{n\to \infty}m_n=0$ implies (\ref{sl2}), we assume
           that $\inf_{n\in \mathbb{N}}m_n >0$. Putting
         \begin{align*}
          a_n:=\max \Big\{ \sep
          \Big(\nu_{n,k};\frac{m_n}{6},
          \frac{\kappa}{2}\Big),  \sep \Big( \nu_{n,k};
          \frac{m_n}{6}, \frac{m_n}{6}\Big) \Big\},
          \end{align*}we get $\lim_{n\to \infty}a_n=0$
         by the assumption (\ref{sl1}) and $\inf_{n\in \mathbb{N}}m_n >0$. Define subsets
         $B_{n,1}$ and $B_{n,2}$ of the set $A_{k}$ by
         $B_{n,1}:=(A_{k-1})_{a_n}\cap
         A_{k}$ and $B_{n,2}:=A_{k}\setminus
         B_{n,1}$, where $(A_{k-1})_{a_n}$ denotes the closed
           $a_n$-neighborhood of $A_{k-1}$. Since $A_{k}=B_{n,1}\cup B_{n,2}$, either
         the following $(1)$ or $(2)$ holds:

         (1) $\nu_{n,k}(B_{n,1})\geq m_n / 2$ for any
         sufficiently large $n\in \mathbb{N}$.

         (2) $\nu_{n,k}(B_{n,2})\geq m_n /2$ for infinitely
         many $n\in \mathbb{N}$.

         We first consider the case (2). We denote by $\mathcal{C}_n$
         the set of all connected components of the set $B_{n,2}$. 

         \begin{claim}\label{cl1}There exists $C_n \in \mathcal{C}_n$ such that
          $\nu_{n,k}(C_n)\geq m_n/6$.
          \begin{proof}If $\nu_{n,k}(C)< m_n/6$ for all
           $C\in \mathcal{C}_n$, then there exists $\mathcal{C}_n'
           \subseteq \mathcal{C}_n$ such that
           \begin{align*}
            \frac{m_n}{6}\leq \nu_{n,k}\Big( \bigcup_{C'\in
            \mathcal{C}_n'}C' \Big) < \frac{m_n}{3}
            \end{align*}because of $\nu_{n,k}(B_{n,2})\geq
           m_n/2$. Putting $\mathcal{C}_n'':= \mathcal{C}_n \setminus
           \mathcal{C}_n'$, we therefore obtain
           \begin{align*}
            2a_n \leq \dist_{\ell^q}\Big(   \bigcup_{C' \in \mathcal{C}_n'} C'   ,
            \bigcup_{C'' \in \mathcal{C}_n''} C''        \Big)\leq \sep
            \Big(\nu_{n,k};\frac{m_n}{6}, \frac{m_n}{6}\Big)<a_n,
            \end{align*}which is a contradiction. This completes the
           proof of the claim.
           \end{proof}
          \end{claim}

         \begin{claim}\label{cl2}Putting $D_n:=(C_n)_{\sep
          (\nu_{n,k};m_n/6, \kappa /2)}\cap
          A_{k}$, we have $\nu_{n,k}(D_n)\geq m_n-\kappa/2$.
          \begin{proof}Take any $\delta>0$. Supposing that
           $\nu_{n,k}((D_n)_{\delta} )<m_n-\kappa /2$, by Claim \ref{cl1}, we get 
           \begin{align*}
            \sep \Big(\nu_{n,k};\frac{m_n}{6}, \frac{\kappa}{2}\Big)<  \dist_{\ell^q}(C_n, A_{k}\setminus (D_n)_{\delta})\leq \sep \Big(\nu_{n,k};\frac{m_n}{6}, \frac{\kappa}{2}\Big),
            \end{align*}which is a contradiction. This proves that
           $\nu_{n,k}((D_n)_{\delta})\geq m_n-\kappa$
           for any $\delta >0$. Tending $\delta \to 0$, we obtain the claim.
           \end{proof}
          \end{claim}

         Observe that $D_n$ is isometrically embbeded into the $\ell^q$-space $(\mathbb{R}^{k},
         \dist_{\ell^q})$. Combining Lemma \ref{l2.4} and Claim \ref{cl2}, we therefore
         obtain
         \begin{align*}
          \diam (\nu_{n,k},m_n-\kappa)\leq \ & \diam
          (\nu_{n,k}|_{D_n},m_n-\kappa)\\
          \leq \ & \diam \Big(\nu_{n,k}|_{D_n},
          \nu_{n,k}(D_n) - 
          \frac{\kappa}{2} \Big) \\
          \leq \ & k^{1/q}\sep
          \Big(\nu_{n,k}|_{D_n};\frac{\kappa}{4k},
          \frac{\kappa}{4k} \Big)\\
          \leq \ & k^{1/q}\sep
          \Big(\nu_{n,k};\frac{\kappa}{4k},
          \frac{\kappa}{4k} \Big) \to 0 \text{ as }n\to \infty.
          \end{align*}This implies (\ref{sl1}).

         We next consider the case (1). Putting $b_n:= a_n+
         \sep (\nu_{n,k}; m_n / 2 , \kappa
         /2)$, as in the proof of Claim \ref{cl2}, we get
         \begin{align*}
          \nu_{n,k}((A_{k-1})_{b_n}
          \cap A_{k})
          =\nu_{n,k} ((B_{n,1})_{\sep
          (\nu_{n,k}; m_n /2, \kappa /2) }
          \cap A_{k})\geq m_n - \frac{\kappa}{2}.
          \end{align*}Note that there exists a Borel measurable map
         $f_{n}:(A_{k-1})_{b_n}\cap A_{k}\to A_{k-1}$ such that
         \begin{align}\label{sl3}
          \dist_{\ell^q}(x,f_{n}(x))=\min \{ \dist_{\ell^q} (x,y) \mid
          y \in A_{k-1}\}\leq b_n
          \end{align}for any $x\in (A_{k-1})_{b_n}\cap A_{k}$. Put $\nu_{n,k-1}:=
         (f_{n})_{\ast}(\nu_{n,k}|_{(A_{k-1})_{b_n}\cap
         A_{k}})$. An easy calculation proves that
           \begin{align*}
            \sep (\nu_{n,k-1};\kappa_1,\kappa_2)\leq \sep
            (\nu_{n,k};\kappa_1,\kappa_2) + 2b_{n}
            \end{align*}for any $\kappa_1,\kappa_2>0$. By this and the property (\ref{sl1}) for
           $\nu_{n,k}$, the measures $\nu_{n,k-1}$
         on $A_{k-1}$ satisfy that
           \begin{align*}
            \lim_{n\to \infty}\sep (\nu_{n,k-1};\kappa_1,\kappa_2) =0
            \end{align*}for any $\kappa_1,\kappa_2>0$. By the assumption of the induction, we therefore get
           \begin{align*}
            \lim_{n\to \infty}\diam \Big(\nu_{n,k-1},\nu_{n,k-1}(A_{k-1})-\frac{\kappa}{2}\Big)=0
            \end{align*}for any $\kappa >0$. By using (\ref{sl3}), we finally obtain
           \begin{align*}
            \diam (\nu_{n,k}, m_n-\kappa)\leq \ & \diam
            (\nu_{n,k-1},m_n-\kappa)+ 2b_n \\
            \leq \ & \diam (\nu_{n,k-1}, \nu_{n,k-1}(A_{k-1})-\kappa/2)
            + 2b_n \to 0 \text{ as }n\to \infty.
            \end{align*}This completes the proof of the lemma.
           \end{proof}
         \end{lem}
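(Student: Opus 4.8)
The plan is to induct on $k$, exploiting the stratified structure of $A_k=\bigcup_{g\in G_\infty}g\mathbb{R}^k$ as a union of $k$-dimensional coordinate subspaces that pairwise intersect only along the lower stratum $A_{k-1}$. The base case $k=0$ is immediate, since $A_0$ is a single point and every partial diameter vanishes. For the inductive step, the one tool I have \emph{inside a single chart} is Lemma \ref{l2.4}, which compares $\diam$ with $\sep$ at the cost of a factor $k^{1/q}$; the whole difficulty is that $A_k$ is a union of \emph{infinitely many} such charts, so Lemma \ref{l2.4} cannot be applied globally. The idea is therefore to route almost all of the mass into a region where Lemma \ref{l2.4} does apply, namely a single chart, while disposing of the remaining mass by dimensional descent to $A_{k-1}$.

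Concretely, I would first handle the trivial possibility $\lim_n m_n=0$ and then assume $\inf_n m_n>0$, so that every separation quantity appearing below tends to $0$ by hypothesis (\ref{sl1}). Fixing $\kappa>0$, I would choose a threshold $a_n\to 0$ assembled from the relevant values $\sep(\nu_{n,k};\cdot,\cdot)$, and split $A_k=B_{n,1}\cup B_{n,2}$, where $B_{n,1}$ is the $a_n$-neighborhood of $A_{k-1}$ inside $A_k$ and $B_{n,2}$ is its complement. By pigeonhole, for each $n$ either $B_{n,1}$ or $B_{n,2}$ carries at least half the mass; after passing to a subsequence I may assume that one of these two cases holds for all $n$.

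In the \emph{far} case, where $B_{n,2}$ is heavy, the geometry does the work: since $B_{n,2}$ avoids a neighborhood of $A_{k-1}$, any path leaving one chart $g\mathbb{R}^k$ for another must cross $A_{k-1}$, so every connected component of $B_{n,2}$ lies inside a single chart. A separation argument then forces one \emph{dominant} component $C_n$ to carry a fixed fraction of the mass: if all components were light, one could group them into two families of comparable, non-negligible mass which are $2a_n$-separated, contradicting $\sep(\nu_{n,k};\cdot,\cdot)\to 0$. Almost all of the mass then lies within a bounded-radius neighborhood $D_n$ of $C_n$, still inside one isometric copy of $\mathbb{R}^k$, and Lemma \ref{l2.4} bounds $\diam(\nu_{n,k},m_n-\kappa)$ by $k^{1/q}\sep(\nu_{n,k};\cdot,\cdot)\to 0$.

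In the \emph{near} case, where $B_{n,1}$ is heavy, I would push the restricted measure onto $A_{k-1}$ by nearest-point projection $f_n$, which displaces points by at most the neighborhood radius $b_n\to 0$. The separation distance of the pushforward $(f_n)_\ast\nu_{n,k}$ exceeds that of the original by at most $2b_n$, hence still vanishes, so the inductive hypothesis on $A_{k-1}$ yields its concentration; pulling back and paying an extra $2b_n$ transfers this to $\nu_{n,k}$. The main obstacle is the far case, specifically the extraction of a single dominant component so as to land inside one isometric chart: this is exactly the point at which the hypothesis that $\sep$ vanishes for \emph{all} pairs $\kappa_1,\kappa_2$ is indispensable, and the delicate bookkeeping is to align the various mass-fractions (something like $m_n/6$, $m_n/2$, and $\kappa/2$) so that the component-selection and the subsequent neighborhood-concentration succeed simultaneously.
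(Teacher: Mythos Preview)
Your proposal is correct and follows essentially the same approach as the paper's proof: induction on $k$, the near/far split via the $a_n$-neighborhood of $A_{k-1}$, the dominant-component extraction in the far case (with the $m_n/6$ bookkeeping) followed by Lemma~\ref{l2.4} on a single chart, and the nearest-point projection to $A_{k-1}$ with the $2b_n$ error in the near case. The only cosmetic difference is that the paper phrases the dichotomy as ``(1) for all large $n$'' versus ``(2) for infinitely many $n$'' rather than passing to a subsequence, but this is equivalent given the opening reduction to subsequences.
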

        \begin{proof}[Proof of Theorem \ref{mth1}]Lemma \ref{pll1} directly
         implies the sufficiency of
         Theorem \ref{mth1}. We shall prove the converse. Let $\{f_n:X_n \to
         (B^{\infty}_{\ell^p},\dist_{\ell^q})\}_{n=1}^{\infty}$ be any
         sequence of $1$-Lipschitz maps. Given an arbitrary $\varepsilon>0$, we
         shall prove that
         \begin{align*}
          \diam ((f_n)_{\ast}(\mu_{X_n}),m_{X_n}-\kappa)\leq 2\varepsilon
          \end{align*}for any $\kappa>0$ and any sufficiently large
         $n\in \mathbb{N}$. Put $k:=k(\varepsilon)$ and $\nu_{n, k}:=
         (F_{\infty, \varepsilon}
         \circ f_n)_{\ast}(\mu_{X_n})$. Since
         \begin{align*}
          \diam ((f_{n})_{\ast}(\mu_{X_n}),m_{X_n} -\kappa)\leq
          \diam(\nu_{n,k}, m_{X_n}-\kappa) +\varepsilon
          \end{align*}by (\ref{s3.1}), it suffices to prove that
         \begin{align}\label{mths1}
          \lim_{n\to \infty}\diam(\nu_{n,k}, m_{X_n}-\kappa)=0.
          \end{align}Since Lemma \ref{l2.1} together with Corollary \ref{c2.1.1}
         and Lemma \ref{l3.1} implies that
         \begin{align*}
          \sep (\nu_{n,k};\kappa_1,\kappa_2)\leq
          (1+k(\varepsilon)^{1/q})\sep(X_n;\kappa_1,\kappa_2)\to 0 \text{ as }n\to \infty
          \end{align*}for any $\kappa_1,\kappa_2>0$, by virtue of Lemma \ref{l3.2},
         we obtain (\ref{mths1}). This completes the proof.
        \end{proof}

         \section{Case of $1\leq q\leq p\leq +\infty$}

          For an mm-space $X$, we define the \emph{concentration function}
        $\alpha_X:(0,+\infty)\to \mathbb{R}$ as the supremum of
        $\mu_X(X\setminus A_{+r})$, where $A$ runs over all Borel subsets
        of $X$ with $\mu_X(A)\geq m_X/2$ and $A_{+r}$ is an open
        $r$-neighborhood of $A$. 
        \begin{lem}[{cf.~\cite[Corollary 2.6]{funagr}}]\label{l4.1}A sequence $\{X_n\}_{n=1}^{\infty}$ of mm-spaces is a
         L\'{e}vy family if and only if $\lim_{n\to
         \infty}\alpha_{X_n}(r)=0$ for any $r>0$. 
        \end{lem}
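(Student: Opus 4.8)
The plan is to reduce the statement to the separation-distance characterization of L\'evy families in Corollary~\ref{c2.1.1}: it suffices to prove that the condition ``$\lim_{n\to\infty}\alpha_{X_n}(r)=0$ for every $r>0$'' is equivalent to ``$\lim_{n\to\infty}\sep(X_n;\kappa,\kappa)=0$ for every $\kappa>0$''. I would prove both implications by contraposition, passing to a subsequence along which the relevant quantity stays bounded away from $0$, and translating ``far apart Borel sets of positive measure'' into ``a half-measure set with a large $r$-complement'' and back.

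For the implication that failure of $\sep$-smallness forces failure of $\alpha$-smallness, suppose $\sep(X_n;\kappa,\kappa)>r$ for some fixed $\kappa,r>0$ and infinitely many $n$, and for such $n$ choose Borel sets $A_n,B_n\subseteq X_n$ with $\mu_{X_n}(A_n),\mu_{X_n}(B_n)\ge\kappa$ and $\dist_{X_n}(A_n,B_n)>r$. The open neighborhoods $(A_n)_{+r/2}$ and $(B_n)_{+r/2}$ are disjoint, since a common point would witness $\dist_{X_n}(A_n,B_n)<r$; hence the complementary closed sets $X_n\setminus(A_n)_{+r/2}$ and $X_n\setminus(B_n)_{+r/2}$ cover $X_n$, so one of them has measure $\ge m_{X_n}/2$. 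If, say, $P_n:=X_n\setminus(B_n)_{+r/2}$ does, then $P_n$ is an admissible set in the definition of $\alpha_{X_n}$, every point of $P_n$ is at distance $\ge r/2$ from $B_n$, so $B_n$ is disjoint from the open $r/2$-neighborhood of $P_n$, and therefore $\alpha_{X_n}(r/2)\ge\mu_{X_n}(B_n)\ge\kappa$. The symmetric case gives $\alpha_{X_n}(r/2)\ge\mu_{X_n}(A_n)\ge\kappa$ instead. In either case $\alpha_{X_n}(r/2)\ge\kappa$ for infinitely many $n$.

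For the reverse direction, suppose $\alpha_{X_n}(r)>\varepsilon$ for some fixed $r,\varepsilon>0$ and infinitely many $n$; for such $n$ choose a Borel set $A_n$ with $\mu_{X_n}(A_n)\ge m_{X_n}/2$ and, writing $B_n:=X_n\setminus(A_n)_{+r}$, with $\mu_{X_n}(B_n)>\varepsilon$. Then $\dist_{X_n}(A_n,B_n)\ge r$, and from $m_{X_n}\ge\mu_{X_n}(A_n)+\mu_{X_n}(B_n)\ge m_{X_n}/2+\varepsilon$ we get $\mu_{X_n}(A_n)\ge m_{X_n}/2\ge\varepsilon$, so $\sep(X_n;\varepsilon,\varepsilon)\ge r$ for infinitely many $n$. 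Combining the two implications with Corollary~\ref{c2.1.1} completes the proof. There is no serious obstacle here; the points requiring care are only the measurability of the neighborhoods and their complements (open, resp.\ closed, sets), the inclusion $((A)_{+r/2})_{+r/2}\subseteq(A)_{+r}$, and the fact that the measures $\mu_{X_n}$ are not assumed normalized, which is precisely why the covering argument is used to manufacture a set of mass at least $m_{X_n}/2$ eligible for the definition of the concentration function.
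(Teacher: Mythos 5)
Your argument is correct, and it is worth noting that the paper itself gives no proof of Lemma \ref{l4.1} at all: it is quoted from \cite[Corollary 2.6]{funagr}, so there is nothing internal to compare against line by line. What you supply is a self-contained proof using only material already in the paper, namely the separation-distance criterion of Corollary \ref{c2.1.1}. Both contrapositive directions check out: if $\sep(X_n;\kappa,\kappa)>r$ infinitely often, the disjointness of the open $r/2$-neighborhoods of the witnessing sets $A_n,B_n$ forces one of the complements $X_n\setminus(A_n)_{+r/2}$, $X_n\setminus(B_n)_{+r/2}$ to carry at least half the mass, and that complement is an admissible set for $\alpha_{X_n}$ whose open $r/2$-neighborhood misses the other witnessing set, giving $\alpha_{X_n}(r/2)\geq\kappa$; conversely, a set $A_n$ of at least half mass with $\mu_{X_n}(X_n\setminus(A_n)_{+r})>\varepsilon$ yields the disjoint pair $A_n$, $B_n:=X_n\setminus(A_n)_{+r}$ at distance $\geq r$ with both measures $\geq\varepsilon$ (your inequality $m_{X_n}\geq\mu_{X_n}(A_n)+\mu_{X_n}(B_n)$ uses the disjointness $A_n\subseteq(A_n)_{+r}$, which is immediate), so $\sep(X_n;\varepsilon,\varepsilon)\geq r$. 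The usual proof in the literature (and presumably in \cite{funagr}) instead links $\alpha_X$ directly to $\obs_{\mathbb{R}}$ via medians of $1$-Lipschitz functions: the sublevel set $\{|f-m_f|\leq r\}$ has measure at least $m_X-2\alpha_X(r)$, which bounds the partial diameter of $f_*(\mu_{X_n})$, and the converse again passes through sets of half mass. Your route through $\sep$ is equally legitimate and arguably more economical here, since Corollary \ref{c2.1.1} is already on hand. Two trivial remarks: the inclusion $((A)_{+r/2})_{+r/2}\subseteq(A)_{+r}$ you flag as needing care is never actually used in your argument, and your handling of the non-normalized measures via the covering trick to produce a set of mass at least $m_{X_n}/2$ is exactly the right fix.
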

Let $p\geq 1$. We shall consider the $\ell^n_p$-sphere
        $\mathbb{S}^n_{\ell^p}:=\{ (x_i)_{i=1}^n \in \mathbb{R}^n \mid
        \sum_{i=1}^{\infty}|x_i|^p =1\}$. We denote by $\mu_{n,p}$ the
        cone measure and $\nu_{n,p}$ the
        surface measure on $\mathbb{S}^n_{\ell^p}$ normalized as
        $\mu_{n,p}(\mathbb{S}^n_{\ell^p})=\nu_{n,p}(\mathbb{S}^n_{\ell^p})=1$. In
        other words, for any Borel subset $A\subseteq
        \mathbb{S}^n_{\ell^p}$, we put
        \begin{align*}
         \mu_{n,p}(A):= \frac{1}{\mathcal{L}(B^n_{\ell^p})} \cdot \mathcal{L}(\{tx \mid x\in A \text{ and
         }0\leq t\leq 1\}),
         \end{align*}where $\mathcal{L}$ is the Lebesgue measure on $\mathbb{R}^n$.

         By the works of G. Schechtman
        and J. Zinn \cite[Theorems 3.1 and 4.1]{schzin} and R. Lata{\l}a and
        J. O. Wojtaszczyk \cite[Theorem 5.31]{latala}, we obtain
        \begin{align}
         \alpha_{(\mathbb{S}^{n}_{\ell^p},\dist_{\ell^2},\mu_{n,p})}(r)\leq
         C\exp (-cnr^{\min\{ 2,p \}}).
        \end{align}This inequality for $p\geq 2$ is also mentioned by
         A. Naor in \cite[Introduction]{naor} (see also
         \cite[Proposition 5.21]{latala}).
    \begin{lem}\label{l4.2}Let $1\leq q\leq p\leq +\infty$. Then, we have
     \begin{align*}
      \alpha_{(\mathbb{S}^{n}_{\ell^p},\dist_{\ell^q},\mu_{n,p})}(r)\leq
      C\exp(-cn^{1+(1/2-1/q)\min\{2,p\}}r^{\min \{2,p\}}) \text{
      if }q<2
      \end{align*}and 
     \begin{align*}
       \alpha_{(\mathbb{S}^{n}_{\ell^p},\dist_{\ell^q},\mu_{n,p})}(r)\leq
      C\exp(-cnr^{\min\{2,p\}}) \text{ if }q\geq 2.
      \end{align*}
     \begin{proof}If $q<2$, by $\dist_{\ell^q}(x,y)\leq
      n^{1/q-1/2}\dist_{\ell^2}(x,y)$, we then have
      \begin{align*}
       \alpha_{(\mathbb{S}^n_{\ell^p},\dist_{\ell^q},\mu_{n,p})}(r)\leq
       \alpha_{(\mathbb{S}^n_{\ell^p},\dist_{\ell^2},\mu_{n,p})}(n^{1/2-1/q}r)\leq  C\exp(-cn^{1+(1/2-1/q)\min\{2,p\}}r^{\min \{2,p\}}).
       \end{align*}If $q\geq 2$, by $\dist_{\ell^q}(x,y)\leq \dist_{\ell^2}(x,y)$,
      we then obtain
      \begin{align*}\alpha_{(\mathbb{S}^n_{\ell^p},\dist_{\ell^q},\mu_{n,p})}(r)\leq
       \alpha_{(\mathbb{S}^n_{\ell^p},\dist_{\ell^2},\mu_{n,p})}(r)\leq C\exp(-cnr^{\min\{2,p\}}).
       \end{align*}This completes the proof.
      \end{proof}
     \end{lem}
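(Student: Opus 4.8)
The plan is to reduce the estimate to the concentration inequality $(4.2)$ for the $\ell^2$-distance by a comparison of metrics. The elementary fact I would use is the following: if $d$ and $d'$ are two metrics on a set $Z$ with $d'(x,y)\leq \lambda\,d(x,y)$ for all $x,y$ and some $\lambda\geq 1$, and $\mu$ is a finite Borel measure on $Z$, then
\begin{align*}
 \alpha_{(Z,d',\mu)}(r)\leq \alpha_{(Z,d,\mu)}(r/\lambda)\qquad\text{for every }r>0.
\end{align*}
Indeed, for a Borel set $A\subseteq Z$, if $d(y,A)<r/\lambda$ then $d'(y,A)\leq \lambda\,d(y,A)<r$, so the open $(r/\lambda)$-neighborhood of $A$ with respect to $d$ is contained in the open $r$-neighborhood of $A$ with respect to $d'$; taking complements, applying $\mu$, and then taking the supremum over all Borel $A$ with $\mu(A)\geq \mu(Z)/2$ yields the inequality.

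Next I would recall the standard norm comparisons on $\mathbb{R}^n$: for $1\leq q<2$ one has $\dist_{\ell^q}(x,y)\leq n^{1/q-1/2}\dist_{\ell^2}(x,y)$ by the power-mean (H\"older) inequality, while for $q\geq 2$ one has $\dist_{\ell^q}(x,y)\leq \dist_{\ell^2}(x,y)$ by monotonicity of the $\ell^q$-norms in $q$. Applying the comparison above with the same measure $\mu_{n,p}$, and with $\lambda=n^{1/q-1/2}$ in the first case and $\lambda=1$ in the second, and then feeding in $(4.2)$, we obtain
\begin{align*}
 \alpha_{(\mathbb{S}^n_{\ell^p},\dist_{\ell^q},\mu_{n,p})}(r)\leq \alpha_{(\mathbb{S}^n_{\ell^p},\dist_{\ell^2},\mu_{n,p})}(n^{1/2-1/q}r)\leq C\exp\!\big(-cn\,(n^{1/2-1/q}r)^{\min\{2,p\}}\big)
\end{align*}
when $q<2$, and $\alpha_{(\mathbb{S}^n_{\ell^p},\dist_{\ell^q},\mu_{n,p})}(r)\leq \alpha_{(\mathbb{S}^n_{\ell^p},\dist_{\ell^2},\mu_{n,p})}(r)\leq C\exp(-cnr^{\min\{2,p\}})$ when $q\geq 2$.

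It then remains only to simplify the exponent of $n$ in the first case, using $n\cdot\big(n^{1/2-1/q}\big)^{\min\{2,p\}}=n^{1+(1/2-1/q)\min\{2,p\}}$, which turns the bound into $C\exp\big(-cn^{1+(1/2-1/q)\min\{2,p\}}r^{\min\{2,p\}}\big)$, exactly as claimed. I do not expect any genuine obstacle here; the only points that need a little care are orienting the metric comparison in the right direction, so that it produces an \emph{upper} bound on the concentration function, and carrying out the arithmetic of the power of $n$ after the substitution $r\mapsto n^{1/2-1/q}r$.
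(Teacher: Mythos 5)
Your argument is correct and is essentially identical to the paper's proof: both reduce to the $\ell^2$ concentration bound (4.1) via the norm comparisons $\dist_{\ell^q}\leq n^{1/q-1/2}\dist_{\ell^2}$ for $q<2$ and $\dist_{\ell^q}\leq\dist_{\ell^2}$ for $q\geq 2$, then substitute $r\mapsto n^{1/2-1/q}r$. Your extra remark justifying that a Lipschitz comparison of metrics rescales the argument of the concentration function is exactly the (implicit) step the paper uses.
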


     \begin{cor}\label{c4.1}The sequences $\{ (\mathbb{S}^n_{\ell^p},
     \dist_{\ell^q}, \mu_{n,p})\}_{n=1}^{\infty}$ and $\{
     (\mathbb{S}^n_{\ell^p}, \dist_{\ell^q},
     \nu_{n,p})\}_{n=1}^{\infty}$ are both L\'{e}vy families for $1\leq q\leq
      p\leq +\infty$.
      \begin{proof}Since $1+(1/2-1/q)\min\{2,p\}>0$, by Lemmas \ref{l4.1}
       and \ref{l4.2},
       the sequence $\{ (\mathbb{S}^n_{\ell^p},
     \dist_{\ell^q}, \mu_{n,p})\}_{n=1}^{\infty}$ is a L\'{e}vy
       family. By virtue of \cite[Theorem 6]{naor}, the sequence $\{\mathbb{S}^n_{\ell^p}, \dist_{\ell^q},
     \nu_{n,p})  \}_{n=1}^{\infty}$ is also a L\'{e}vy family. This
       completes the proof.
       \end{proof}
      \end{cor}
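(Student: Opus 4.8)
The plan is to establish the L\'{e}vy property for each of the two sequences via the concentration-function criterion of Lemma \ref{l4.1}: first directly for the cone measure, and then, by comparing measures, for the surface measure.

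First I would treat the cone measure $\mu_{n,p}$. The main point will be that the power of $n$ occurring in the exponent of the estimate in Lemma \ref{l4.2} is strictly positive: calling it $\beta$ --- so $\beta=1$ when $q\ge 2$ and $\beta=1+(1/2-1/q)\min\{2,p\}$ when $q<2$ --- one checks that $\beta>0$ throughout the range $1\le q\le p\le+\infty$. Granting this, for each fixed $r>0$ Lemma \ref{l4.2} gives
\[
\alpha_{(\mathbb{S}^n_{\ell^p},\dist_{\ell^q},\mu_{n,p})}(r)\le C\exp(-cn^{\beta}r^{\min\{2,p\}})\longrightarrow 0\qquad(n\to\infty),
\]
hence $\lim_{n\to\infty}\alpha_{(\mathbb{S}^n_{\ell^p},\dist_{\ell^q},\mu_{n,p})}(r)=0$ for all $r>0$, and Lemma \ref{l4.1} yields that $\{(\mathbb{S}^n_{\ell^p},\dist_{\ell^q},\mu_{n,p})\}_{n=1}^{\infty}$ is a L\'{e}vy family.

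Next I would pass from $\mu_{n,p}$ to the surface measure $\nu_{n,p}$. Here the idea is to exploit the fact that on $\mathbb{S}^n_{\ell^p}$ the normalized cone and surface measures are comparable uniformly in $n$ (Naor's cone--surface comparison), so that being a L\'{e}vy family is insensitive to replacing one by the other; concretely I would invoke \cite[Theorem 6]{naor} and combine it with the first part to conclude that $\{(\mathbb{S}^n_{\ell^p},\dist_{\ell^q},\nu_{n,p})\}_{n=1}^{\infty}$ is a L\'{e}vy family as well.

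The substantive content is already packaged in the inputs: the concentration estimates of Schechtman--Zinn and Lata{\l}a--Wojtaszczyk behind Lemma \ref{l4.2}, and Naor's measure comparison. Once those are in hand the corollary drops out immediately, so the only point needing genuine care is the bookkeeping that the exponent $\beta$ stays positive over the whole parameter range $1\le q\le p\le+\infty$ (and, for $\nu_{n,p}$, the clean invocation of the comparison). I do not expect any obstacle beyond this.
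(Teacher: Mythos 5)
Your proposal is essentially the paper's own proof: the cone-measure case via Lemma \ref{l4.1} combined with the exponent of $n$ in Lemma \ref{l4.2}, and the surface-measure case by invoking the cone--surface comparison of \cite[Theorem 6]{naor}. One remark that applies equally to the paper's argument: your claim that the exponent is positive throughout $1\leq q\leq p\leq+\infty$ fails at the boundary case $q=1$, $p\geq 2$, where $1+(1/2-1/q)\min\{2,p\}=0$ and the bound of Lemma \ref{l4.2} no longer decays in $n$, so that case would need a separate (sharper) concentration input.
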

 	\begin{prop}\label{p4.1}Let $1\leq q\leq p\leq +\infty$. Then, for any $\kappa$ with $0<\kappa <1/2$, we have
	 \begin{align*}
      \obs_{(B_{\ell^p}^{\infty},\dist_{\ell^q})}
	  ((\mathbb{S}^n_{\ell^p},\dist_{\ell^q},\mu);-\kappa) \geq 2,
	  \end{align*}where $\mu=\mu_{n,p}$ or $\mu=\nu_{n,p}$.
     \begin{proof}Let $A\subseteq \mathbb{S}^{n}_{\ell^p}$ be a Borel
	  subset such that $\mu(A)\geq
         1-\kappa$. Since $\mu(A)=
         \mu (-A) >1/2$, we have
         $\mu(A \cap (-A))>0$. Hence, there exists $x\in A$ such that
      $-x\in A$. Since $\diam A \geq \dist_{\ell^q}(x,-x)\geq
         \dist_{\ell^p}(x,-x) = 2$, we obtain $\diam
         (\mu,1-\kappa) =2$. Since the inclusion map from
      the space $(\mathbb{S}^n_{\ell^p},\dist_{\ell^q})$ to the space
      $(B^{\infty}_{\ell^p},\dist_{\ell^q})$ is $1$-Lipschitz, we obtain
      the conclusion. This completes the proof.
      \end{proof}
	 \end{prop}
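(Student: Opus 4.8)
The plan is to exploit the symmetry $x \mapsto -x$ of the $\ell^p$-sphere together with the fact that any subset of measure bigger than $1/2$ must meet its own antipode. First I would fix a Borel set $A \subseteq \mathbb{S}^n_{\ell^p}$ with $\mu(A) \geq 1-\kappa$, where $\mu$ is either the cone measure $\mu_{n,p}$ or the surface measure $\nu_{n,p}$; both are invariant under the antipodal map $x \mapsto -x$, since this map is a linear isometry of $(\mathbb{R}^n, \dist_{\ell^p})$ preserving the sphere and scaling the Lebesgue measure trivially. Hence $\mu(-A) = \mu(A) \geq 1-\kappa > 1/2$ by the hypothesis $\kappa < 1/2$, so $\mu(A) + \mu(-A) > 1$ forces $\mu(A \cap (-A)) > 0$; in particular $A \cap (-A) \neq \emptyset$, so there is a point $x \in A$ with $-x \in A$ as well.

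Next I would estimate the diameter. Since $x, -x \in \mathbb{S}^n_{\ell^p}$, we have $\dist_{\ell^p}(x,-x) = \|2x\|_{\ell^p} = 2\|x\|_{\ell^p} = 2$. Because $q \leq p$ we have $\dist_{\ell^q} \geq \dist_{\ell^p}$ pointwise on $\mathbb{R}^n$, so $\dist_{\ell^q}(x,-x) \geq 2$, and therefore $\diam_{\ell^q} A \geq 2$. As this holds for every admissible $A$, we conclude $\diam(\mu, 1-\kappa) \geq 2$. (In fact $\diam(\mu, 1-\kappa) = 2$, since the whole sphere has $\ell^q$-diameter exactly $2$, but only the lower bound is needed.)

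Finally, I would transport this to the target space: the inclusion $\iota : (\mathbb{S}^n_{\ell^p}, \dist_{\ell^q}) \hookrightarrow (B^\infty_{\ell^p}, \dist_{\ell^q})$ is $1$-Lipschitz (it is an isometric embedding), and $\iota_*(\mu)$ is exactly $\mu$ viewed as a measure on $B^\infty_{\ell^p}$, so by the definition of the observable diameter $\obs_{(B^\infty_{\ell^p},\dist_{\ell^q})}((\mathbb{S}^n_{\ell^p},\dist_{\ell^q},\mu);-\kappa) \geq \diam(\iota_*(\mu), 1-\kappa) = \diam(\mu,1-\kappa) \geq 2$. There is no real obstacle here; the only point requiring a moment's care is checking that the surface measure $\nu_{n,p}$ (and not just the cone measure) is antipodally invariant, which follows because the antipodal map is an isometry of the ambient $\ell^p$ metric and the surface measure is defined intrinsically from that metric structure.
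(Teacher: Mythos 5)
Your proposal is correct and follows essentially the same route as the paper: antipodal invariance of $\mu$ forces $A\cap(-A)\neq\emptyset$ when $\mu(A)>1/2$, the norm comparison $\dist_{\ell^q}\geq\dist_{\ell^p}$ for $q\leq p$ gives $\diam A\geq 2$, and the $1$-Lipschitz inclusion into $(B^{\infty}_{\ell^p},\dist_{\ell^q})$ transfers the partial-diameter bound to the observable diameter. No gaps.
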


     Combining Corollary \ref{c4.1} with Proposition \ref{p4.1}, we obtain an example of a
     L\'{e}vy family which does not satisfy (\ref{smth1}) in the case of $1\leq
     q\leq p \leq +\infty$.

\begin{ack}\upshape
        The author would like to express his thanks to
  Professor Takashi Shioya for his valuable suggestions and assistances
 during the preparation of this paper. The author thanks Professor Masaki
 Tsukamoto for telling his interesting work \cite{tsukamoto}. The author
 also thanks Professor A. Naor for his useful comments. Without them,
 this work would have never been completed.
 \end{ack}

	\end{document}